\RequirePackage{fix-cm}
\documentclass[smallcondensed]{svjour3}

\smartqed
\usepackage{hyperref}
\usepackage{amsmath}
\usepackage{amsfonts,amssymb}

\makeatletter

\newenvironment{indentation}[2]%
{%
  \par
  \setlength{\leftmargin}{#1}%
  \setlength{\rightmargin}{#2}%
  \advance\linewidth -\leftmargin
  \advance\linewidth -\rightmargin
  \advance\@totalleftmargin\leftmargin
  \@setpar{{\@@par}}%
  \parshape 1 \@totalleftmargin \linewidth
  \ignorespaces
}
{\par\vspace*{.3em}}

\makeatother

\newenvironment{lproof}{{\emph{Proof.} }}{\hfill $\triangle$ \\}


\newcommand{\F}{\mathbb{F}}
\newcommand{\C}{{\bf C}}
\newcommand{\Z}{{\bf Z}}

\newcommand{\m}[1]{\mathfrak{#1} }
\newcommand{\Ker}{\operatorname{Ker} }

\allowdisplaybreaks

\begin{document}

\title{Kernels of linear maps}
\subtitle{A generalization of Duistermaat and Van der Kallens theorem}
\author{Arno van den Essen \and Jan Schoone}
\institute{
		Arno van den Essen \at Radboud University, Nijmegen
		\and
		Jan Schoone \at Digital Security, Radboud University, Nijmegen\texorpdfstring{\\\email{jan.schoone@ru.nl}}{}}
\date{Received: date / Accepted: date}

\maketitle

\begin{abstract}
The theorem of Duistermaat and Van der Kallen from 1998 proved the first case of the Mathieu conjecture.
Using the theory of Mathieu-Zhao spaces, we can reformulate this theorem as $\Ker L$ is a Mathieu-Zhao space where $L$ is the linear map 
    \begin{align*}
        L \colon \C[X_1,\ldots,X_n,X_1,\ldots,X_n^{-1}] \to \C,\ f \mapsto f_0.
    \end{align*}
In this paper, we generalize this result (for $n=1$) to all non-trivial linear maps $L\colon \C[X,X^{-1}] \to \C$ such that $\{ X^n \mid |n| \geq N\} \subset \Ker L$ for some $N\geq 1$.
\keywords{algebraic geometry \and commutative algebra \and mathieu-zhao spaces \and mathieu conjecture}
\end{abstract}

\section{Introduction}

In 1939, Ott-Heinrich Keller \cite{keller} formulated what is now known as the Jacobian conjecture:

\begin{conjecture}[Jacobian conjecture] Let $k$ be a field of characteristic zero and $F \in k[X_1,\ldots,X_n]^n$ a polynomial mapping such that its Jacobian determinant $J_F$ is a non-zero constant.
Then $F$ has an inverse function $G \in k[X_1,\ldots,X_n]^n$.
\end{conjecture}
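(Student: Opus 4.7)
The plan is to attack the Jacobian conjecture through the Mathieu–Zhao space framework that this paper develops, since that framework is currently the most promising algebraic bridge to the conjecture and is precisely the context in which the authors are working. The approach has three stages.

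First, I would invoke the Bass–Connell–Wright reduction to assume $F = X - H$ where $H$ is cubic homogeneous and the Jacobian matrix $JH$ is nilpotent. This reduction is a standard symmetric-power argument that does not introduce new obstructions; it reduces the whole conjecture, in all dimensions and degrees, to proving polynomial invertibility of $F$ in this combinatorially clean special case. Concretely one then needs to show that the formal inverse $G = \sum_{k\geq 0} H^{(k)}$ is a finite sum.

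Second, I would exploit the chain of implications, due to Mathieu and later sharpened by Zhao, linking the Jacobian conjecture to Mathieu–Zhao properties of subspaces of $\mathbb{C}[X_1^{\pm 1},\ldots,X_n^{\pm 1}]$. Writing $L\colon \mathbb{C}[X_1^{\pm 1},\ldots,X_n^{\pm 1}] \to \mathbb{C}$ for the constant-term functional, the Duistermaat–Van der Kallen theorem states that $\Ker L$ is a Mathieu–Zhao space. Through Mathieu's integration identity $f \mapsto (fJF)_0$, applied to the cubic homogeneous $F$ of the first step, the Mathieu–Zhao property is translated into termination of the formal inverse series of $F$, which is exactly polynomial invertibility. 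For this translation to close the argument one needs the Mathieu–Zhao property not merely for $\Ker L$ but for the wider family of functionals that arise once $F$ is absorbed into the integration; the present paper's enlargement of this class, albeit in the case $n=1$, is explicitly a step in this program.

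The hard part---and the reason the conjecture has remained open since Keller posed it in 1939---is this last translation step in dimension $n\geq 2$. Passing from ``the relevant subspace is Mathieu–Zhao'' to ``this specific cubic-homogeneous nilpotent $F$ has a polynomial inverse'' has resisted every attempt, and every known reduction merely reshapes the problem rather than closing it. My plan therefore reaches the very barrier that has stalled Mathieu's program itself: without a genuinely new ingredient at this juncture---either an $n$-dimensional generalisation of the paper's enlarged Mathieu–Zhao class strong enough to cover the functionals produced by $(fJF)_0$, or a direct finiteness argument for $\sum_k H^{(k)}$---the plan as described cannot be carried to completion, and I would not claim to overcome that obstacle here.
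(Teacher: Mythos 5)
This statement is the Jacobian conjecture, which the paper states as an open conjecture and does not prove; no proof of it exists anywhere, and the paper's actual contribution is only a one-dimensional generalization of the Duistermaat--Van der Kallen theorem, which is one case of the Mathieu conjecture, which in turn would \emph{imply} the Jacobian conjecture. Your proposal correctly recites the standard reduction pipeline (Bass--Connell--Wright cubic-homogeneous reduction, Mathieu's reformulation, Zhao's Mathieu--Zhao space framework, and Duistermaat--Van der Kallen as the abelian case), and you are candid that the final translation step in dimension $n\geq 2$ is exactly the open problem. That candour is to your credit, but it means the proposal contains no proof: the gap is not a missing lemma or a repairable step, it is the entire content of the conjecture. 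In particular, even the full Mathieu conjecture is unproven outside the abelian case, so the second stage of your plan already rests on an unavailable hypothesis before you reach the translation step you flag as the barrier.

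One smaller correction: the implication established by Mathieu runs from the Mathieu conjecture to the Jacobian conjecture, not through a direct ``integration identity'' applied to a single cubic-homogeneous $F$; the known bridge requires the Mathieu property for a whole class of functions on a compact group (or, in Zhao's algebraic reformulations, for specific subspaces such as images of differential operators), and the present paper's Theorem \ref{moregeneral} enlarges the class of kernels known to be Mathieu--Zhao only for $n=1$, where the Jacobian conjecture is already classical. So nothing in this paper, and nothing in your proposal, advances the statement beyond its status as a conjecture.
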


Since then, this conjecture remained unsolved (apart from the case $n=1$).
For a historical treaty and a collection of results around the Jacobian conjecture, one can read \cite{Ess0} and \cite{Ess2}.
In 1997, Olivier Mathieu showed in \cite{Mat} that the Jacobian conjecture follows from what is now known as the Mathieu conjecture:

\begin{conjecture}[Mathieu conjecture] Let $G$ be a compact connected real Lie group with Haar measure $\sigma$.
Let $f$ be a complex valued $G$-finite function on $G$ such that $\int_G f^m d\sigma = 0$ for all $m\geq 1$.
Then for every $G$-finite function $g$ on $G$, also $\int_G gf^m d\sigma = 0$ for all large $m$.
\end{conjecture}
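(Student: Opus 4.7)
The Mathieu conjecture has been open for a quarter-century, so I can only offer a strategic outline rather than a plan that will close in a finite number of steps; my outline is the classical reduction to a maximal torus that underlies the Duistermaat--Van der Kallen partial result and that the present paper continues.

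By the Peter--Weyl theorem, every $G$-finite function is a finite linear combination of matrix coefficients of the irreducible representations of $G$, and the Haar integral $\int_G h\, d\sigma$ extracts the component of $h$ along the trivial representation. For a class function $h$ the Weyl integration formula rewrites this as $\int_G h\, d\sigma = \frac{1}{|W|} \int_T h \cdot |\Delta|^2 \, d\tau$, with $T$ a maximal torus, $W$ the Weyl group, and $\Delta$ the Weyl denominator. A $G$-finite function restricted to $T \cong (S^1)^n$ becomes a Laurent polynomial in $X_1, \ldots, X_n$, so the class-function case of the conjecture translates into a statement about the kernel of the linear functional $L(h) := \int_T h \cdot |\Delta|^2 \, d\tau$ on $\C[X_1^{\pm 1}, \ldots, X_n^{\pm 1}]$. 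Since the support of $|\Delta|^2$ is a bounded window in the character lattice, $L$ vanishes on monomials outside that window, so $L$ is precisely of the type studied in this paper.

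The full plan is then: (i) reduce to class functions, (ii) apply Weyl integration to land in the Laurent polynomial algebra, and (iii) invoke the Mathieu--Zhao property of $\Ker L$. The present paper establishes step (iii) for $n = 1$, which is the first case relevant to a rank-one non-abelian group like $SU(2)$, where $|\Delta|^2 = 2 - X^2 - X^{-2}$; this strictly extends the Duistermaat--Van der Kallen setting $L(h) = [X^0] h$, which only covers the torus $T = S^1$ itself.

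The serious obstacles are (i) and (iii). For non-class $f$ the power $f^m$ is itself not a class function and its conjugation average is not $(\overline{f})^m$, so reducing to class functions is not a bare averaging step but requires a genuine representation-theoretic computation involving the multiplicities of the trivial representation in the tensor powers $\pi^{\otimes m}$ of the irreducibles appearing in $f$. And extending the kernel-of-$L$ result from $n = 1$ to $n > 1$ is the combinatorial heart of the Mathieu conjecture: one must control lattice-point counts in dilates of the Newton polytope of $f$, weighted by the fixed Laurent polynomial $|\Delta|^2$, and this appears to be out of reach of currently known techniques.
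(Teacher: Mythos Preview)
The statement is the Mathieu \emph{conjecture}; the paper states it only as motivation and offers no proof of it, so there is no paper argument to compare your proposal against. You correctly recognise this and present a strategy rather than a proof.

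Your outline is technically sound and situates the paper's contribution accurately. The reduction via Weyl integration to a weighted constant-term functional on the Laurent polynomial ring of a maximal torus is the standard framework, and your observation that Theorem~\ref{thm136} is exactly what is needed for the class-function case of a rank-one group is correct: for $SU(2)$ one has $|\Delta|^2 = 2 - X^2 - X^{-2}$, so the relevant functional $L(h) = [X^0](h\cdot|\Delta|^2)$ satisfies $L(1)=2\neq 0$ and $L(X^n)=0$ for $|n|\geq 3$, placing it squarely inside the hypotheses of Theorem~\ref{thm136} but outside the original Duistermaat--Van der Kallen functional $h\mapsto h_0$. The paper does not make this application explicit, so your remark is a genuine added value.

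You also correctly isolate the two real obstructions --- the non-class-function reduction and the multivariable weighted MZ-property --- and do not overclaim. There is no gap to flag: you have not asserted a proof, and none exists in the paper either.
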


The Mathieu conjecture has been proved for the case of connected abelian groups by Duistermaat and Van der Kallen in 1998 \cite{DuiKal}.

From the formulation of the Mathieu conjecture, Wenhua Zhao formulated the notion of Mathieu-Zhao spaces that are a generalization of ideals. 
For history and details, see \cite{Zhao1} and \cite{Ess2}.

Together with this formulation, we manage to prove a generalization of the theorem of Duistermaat and Van der Kallen in dimension one.

\section{Notations and conventions} 

We write $\Z$ for the ring of integers, $\C$ for the field of complex numbers, and $\F_q$ for a finite field of $q$ elements.
Furthermore, we always assume all rings to be commutative and to have an identity.
Similarly, we assume all algebras to be unital and associative. 

\section{Mathieu-Zhao spaces and partial fractions decomposition}

Because of the shape of the statement in the Mathieu conjecture \cite{Mat}, we can define the following generalisation of ideals in associative algebras, which we call Mathieu-Zhao spaces.

\begin{definition} 
Let $R$ be any ring and $A$ an $R$-algebra. 
For any subset $M$ of $A$ we define the \emph{radical of $M$} by
    \begin{align*} r(M) := \{ a \in A \mid \forall m \gg 0: a^m \in M\}\end{align*}
with $m \gg 0$ meaning all $m \geq N$ for some $N \geq 1$.
\end{definition}

\begin{remark} 
For an ideal $I$ of $R$, this coincides with the usual definition of the radical of $I$, being:
    \begin{align*} \sqrt{I} := \{ a \in R \mid \exists n : a^n \in I\}.\end{align*}
It is well-known that the radical of an ideal is an ideal itself, and hence all $a^{n+i}$ are elements of $I$ for $i\geq 0$. 
\end{remark}

\begin{definition} 
Let $R$ be any ring and $A$ an $R$-algebra. 
For any subset $M$ of $A$ we define the \emph{(left) strong radical of $M$} by
    \begin{align*} sr(M) := \{ a\in A \mid \forall b\in A\ \forall m \gg 0: ba^m \in M \}.\end{align*}
\end{definition}

\begin{remark} 
Note that if $a\in sr(M)$, then $1\cdot a^m \in M$ for all $ m \gg 0$, hence $a\in r(M)$. 
So we always have $sr(M)\subset r(M)$. 
\end{remark}

\begin{definition} 
Let $R$ be any ring and $A$ an $R$-algebra. 
We say that an $R$-submodule $M$ of $A$ is a Mathieu-Zhao space of $A$ (short: MZ-space) if and only if $sr(M)=r(M)$. 
\end{definition}

A little less is needed in practice. 
Define $r'(M) = \{ a \in A \mid \forall m \geq 1: a^m \in M\}$.  If $r'(M) \subset sr(M)$, then $M$ is an MZ-space of $A$.

\begin{proposition}[\cite{Ess2}] \label{cor224} 
Let $R$ be a ring and $A$ an $R$-algebra. 
Let $M$ be an additive subset of $A$ and $I\subset A$ an ideal of $A$ such that $I\subset M$. 
Then $M$ is an MZ-space of $A$ if and only if $M/I$ is an MZ-space of $A/I$.
\end{proposition}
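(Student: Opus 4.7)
The plan is to lift the MZ-space condition through the quotient map $\pi\colon A \twoheadrightarrow A/I$. Since $\pi$ is surjective, two subsets of $A/I$ coincide if and only if their preimages in $A$ coincide, so it suffices to establish the two identities
\begin{align*}
r(M) = \pi^{-1}(r(M/I)) \quad \text{and} \quad sr(M) = \pi^{-1}(sr(M/I)).
\end{align*}
Once both are in place, $r(M) = sr(M)$ is equivalent to $r(M/I) = sr(M/I)$, which is exactly the statement.

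The engine behind both identities is the observation that $I \subset M$ combined with $M$ being additive forces $\pi^{-1}(M/I) = M$. Indeed, any $x \in A$ with $\pi(x) \in M/I$ can be written as $x = m + i$ with $m \in M$ and $i \in I \subset M$, hence $x \in M$; the reverse inclusion is trivial. To handle $r$, I apply this pointwise to $x = a^n$, using $\pi(a^n) = \pi(a)^n$ and the fact that the quantifier $n \gg 0$ passes through $\pi$ without change. To handle $sr$, I apply it to $x = ba^n$ for each $b \in A$, and use surjectivity of $\pi$ to identify the universal quantifier over $\bar b \in A/I$ with the universal quantifier over $b \in A$.

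I do not expect a real obstacle here. The only subtlety worth flagging is that both radicals involve the eventual quantifier $n \gg 0$, whose threshold $N$ is allowed to depend on $a$ (and on $b$, in the case of $sr$); since the equivalence $x \in M \iff \pi(x) \in M/I$ holds at each individual $n$, the same threshold works on both sides of $\pi$ and the eventual quantifier transfers cleanly in both directions. With that checked, the proposition reduces to a chain of definitional equivalences.
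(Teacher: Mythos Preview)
Your argument is correct. The paper itself does not give a proof of this proposition; it simply cites the reference \cite{Ess2}. So there is no in-paper proof to compare against.

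For the record, your reduction is the natural one and the details check out. The key identity $\pi^{-1}(M/I)=M$ uses exactly the two hypotheses available (additivity of $M$ and $I\subset M$), and from it both $r(M)=\pi^{-1}\bigl(r(M/I)\bigr)$ and $sr(M)=\pi^{-1}\bigl(sr(M/I)\bigr)$ follow by the pointwise argument you describe: $\pi(a^n)=\pi(a)^n$ handles the radical, and surjectivity of $\pi$ lets the universal quantifier over $b\in A$ be traded for the quantifier over $\bar b\in A/I$ in the strong radical. Since $\pi$ is surjective, equality of preimages is equivalent to equality of the underlying subsets of $A/I$, and the equivalence $r(M)=sr(M)\iff r(M/I)=sr(M/I)$ follows. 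Your remark that the threshold $N$ in ``$n\gg 0$'' transfers unchanged across $\pi$ is the only point where one might slip, and you have it right.
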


\begin{example} 
Let $R$ be a ring and $A$ an $R$-algebra. 
Let $I$ be an ideal of $A$.
Then $I$ is an MZ-space.
\end{example}
\begin{lproof}
We need to show that $r'(M) \subset sr(M)$.
Thus, let $a\in r'(I)$ be arbitrary.
Then $a\in I$. Since $I$ is an ideal, we have for any $b\in A$ that $ba^na\in I$ for all $n\geq 0$. 
Therefore $a\in sr(I)$.
\end{lproof}

\subsection{Zhao's Idempotency Theorem}

One of the most important theorems regarding MZ-spaces is Zhao's Idempotency Theorem:

\begin{theorem}[Zhao's Idempotency Theorem \cite{Zhao1}] \label{zhao} 
Let $k$ be a field and $A$ a $k$-algebra. 
Let $M$ be a $k$-linear subspace of $A$ such that all elements of $r(M)$ are algebraic over $k$. 
Then $M$ is an MZ-space of $A$ if and only if $Ae \subset M$ for all idempotents $e$ which belong to $M$.
\end{theorem}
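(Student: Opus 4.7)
My approach splits into two directions. The ``only if'' direction is immediate: if $e \in M$ is idempotent, then $e^m = e \in M$ for every $m \geq 1$, so $e \in r'(M) \subset r(M) = sr(M)$; the definition of $sr$ then gives $be = be^m \in M$ for every $b \in A$ and $m \gg 0$, hence $Ae \subset M$.

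For the converse I must show $r(M) \subset sr(M)$ (the reverse inclusion is automatic). Fix $a \in r(M)$; the plan is to extract from $a$ an idempotent $e \in M$ and then invoke the hypothesis $Ae \subset M$. Since $a$ is algebraic over $k$, the algebra $k[a]$ is finite-dimensional. Writing the minimal polynomial of $a$ as $p(X) = X^s g(X)$ with $g(0) \neq 0$, the Chinese Remainder Theorem gives
\begin{align*}
    k[a] \cong k[X]/(X^s) \times k[X]/(g(X)),
\end{align*}
together with a corresponding idempotent $e \in k[a]$ (the identity of the second factor) for which $a^s(1-e) = 0$ and $ae$ is a unit in the subalgebra $ek[a]$ (because $X$ is invertible modulo $g$, as $g(0) \neq 0$).

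The heart of the proof will be showing $e \in M$. Since $ae$ is a unit in the finite-dimensional algebra $ek[a]$, multiplication by $(ae)^{m_0}$ is a $k$-linear bijection of $ek[a]$, so for every $m_0 \geq 1$,
\begin{align*}
    \mathrm{span}_k\{(ae)^m : m \geq m_0\} = (ae)^{m_0} \cdot ek[a] = ek[a];
\end{align*}
in particular, $e$ lies in this span. Choosing $m_0 \geq \max(s, N)$ with $N$ such that $a^m \in M$ for all $m \geq N$, each $(ae)^m = a^m e = a^m$ belongs to $M$, so the entire span sits inside $M$ and hence $e \in M$.

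The hypothesis now yields $Ae \subset M$, and for every $b \in A$ and $m \geq s$,
\begin{align*}
    ba^m = b(a^m e) = (ba^m) e \in Ae \subset M,
\end{align*}
which proves $a \in sr(M)$. The main obstacle is the extraction step in the previous paragraph: it is precisely where the algebraicity hypothesis does its essential work, supplying the finite-dimensional $k[a]$, the CRT splitting, and the unit/nilpotent decomposition of $a$ that allow us to locate the idempotent $e$ inside $M$.
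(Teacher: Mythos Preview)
The paper does not actually prove this theorem: it is quoted from \cite{Zhao1} and only used as a black box, so there is no ``paper's own proof'' to compare against. That said, your argument is correct and is essentially the standard proof. The ``only if'' direction is exactly as you say; for the ``if'' direction, the Fitting-type splitting $k[a]\cong k[X]/(X^s)\times k[X]/(g)$ with $g(0)\neq 0$ produces the idempotent $e$ with $a^m(1-e)=0$ for $m\ge s$ and $ae$ a unit in $ek[a]$, and your span computation
\[
\mathrm{span}_k\{(ae)^m:m\ge m_0\}=(ae)^{m_0}\,ek[a]=ek[a]\ni e
\]
is the crucial step (using that $e,a e,(ae)^2,\dots$ already span $ek[a]$ and that $(ae)^{m_0}$ acts bijectively). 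A couple of minor remarks worth making explicit: the degenerate cases $s=0$ (so $e=1$, forcing $1\in M$ and hence $M=A$) and $g$ constant (so $e=0$ and $a$ nilpotent) are covered by your argument but deserve a word; and in the final line you use $a^m=a^me$ for $m\ge s$, which holds because $e\in k[a]$ commutes with $a$, so $ba^m=(ba^m)e\in Ae$ by associativity even if $A$ is noncommutative.
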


In particular, if $M$ is an MZ-space of $A$ and $1 \in M$, then $M=A$.

\begin{corollary} \label{suchashame} 
Let $A$ be a $k$-algebra and suppose that all elements of $r(M)$ are algebraic over $k$. 
Then $M$ is an MZ-space of $A$ if and only if $r(M)$ is an ideal of $A$.
\end{corollary}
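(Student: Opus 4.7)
The plan is to apply Zhao's Idempotency Theorem (Theorem~\ref{zhao}) in both directions, using the algebraicity hypothesis to extract idempotents from $r(M)$ via the Chinese Remainder Theorem decomposition of $k[a]$ for each $a \in r(M)$.

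For the direction $\Rightarrow$, assume $M$ is an MZ-space and fix $a \in r(M)$. Since $a$ is algebraic, CRT yields $k[a] \cong \prod_i B_i$ into local factors; let $e \in k[a]$ be the idempotent supported on those factors in which the image of $a$ is a unit, so that $ae$ is a unit in the subalgebra $k[a]\cdot e$ and $a(1-e)$ is nilpotent of some index $N_0$. For $m \geq N_0$ one has $a^m = (ae)^m$, and since $ae$ is a unit in the finite-dimensional algebra $k[a]\cdot e$ (so multiplication by a power of $ae$ is a $k$-linear bijection on $k[a]\cdot e$), the $k$-span of $\{a^m : m \geq N_0\}$ equals $k[a]\cdot e$. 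Because each $a^m$ lies in $M$ by definition of $r(M)$ and $M$ is a $k$-subspace, $k[a]\cdot e \subseteq M$; in particular $e \in M$, and Zhao's theorem then yields $Ae \subseteq M$. Consequently, for any $c \in A$ and $m \geq N_0$, $(ca)^m = c^m a^m = c^m a^m e \in Ae \subseteq M$, so $ca \in r(M)$. For additive closure: given $a_1, a_2 \in r(M)$ with associated idempotents $e_1, e_2$ and thresholds $N_1, N_2$, each term $\binom{m}{k} a_1^k a_2^{m-k}$ of $(a_1+a_2)^m$ lies in $Ae_1$ when $k \geq N_1$ and in $Ae_2$ when $m - k \geq N_2$; both inequalities fail simultaneously only when $m < N_1 + N_2$, so $(a_1 + a_2)^m \in M$ for $m \geq N_1 + N_2$, which together with the obvious closure under $k$-scaling makes $r(M)$ an ideal.

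For the converse, assume $r(M)$ is an ideal and let $e \in M$ be idempotent; by Zhao's theorem it suffices to verify $Ae \subseteq M$. Since $e^m = e \in M$, $e \in r(M)$, and the ideal hypothesis gives $Ae \subseteq r(M)$, so for every $c \in A$ we have $(ce)^m = c^m e \in M$ eventually. Reformulated inside the commutative algebraic $k$-subalgebra $Ae$ (with identity $e$), this says $M' := M \cap Ae$ satisfies $r_{Ae}(M') = Ae$. The desired $Ae \subseteq M$ will then follow from the auxiliary lemma: \emph{in an algebraic commutative $k$-algebra $B$, any $k$-subspace $M' \subseteq B$ with $r(M') = B$ equals $B$.} I would prove this lemma by taking $b \in B$, choosing $\lambda \in k$ so that $b - \lambda\cdot 1_B$ is a unit in $k[b]$ (possible whenever $\lambda$ is not a root of the minimal polynomial of $b$), and applying the span identification from the forward direction to the unit $b - \lambda$ to obtain $k[b-\lambda] \subseteq M'$; then $b = (b-\lambda) + \lambda\cdot 1_B \in M'$.

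The main obstacle I anticipate is precisely the shift step in the converse: over $\C$ or any infinite field (the setting of the paper) suitable $\lambda$ always exists, but a general field $k$ would require either a mild assumption on $|k|$ relative to the degree of the minimal polynomial of $b$, or an alternative route to conclude $b \in M'$ directly from the eventual containment $\{b^m\}_{m\gg 0} \subseteq M'$. The technical content that drives both directions is the span identification $k$-span$\{a^m : m \geq N_0\} = k[a]\cdot e$, which is where the algebraicity hypothesis and the CRT decomposition earn their keep.
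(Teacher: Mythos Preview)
The paper records this as a corollary of Zhao's Idempotency Theorem without supplying a proof, so there is nothing to compare against line by line; your derivation via the CRT/idempotent decomposition of $k[a]$ together with the span identity $\operatorname{span}_k\{a^m:m\ge N\}=k[a]\,e$ is exactly the kind of argument one expects here, and it is correct in the paper's setting $k=\C$ (indeed over any infinite field). One cosmetic point: in the forward direction the threshold $N_0$ should be chosen at least as large as both the nilpotency index of $a(1-e)$ and the bound coming from $a\in r(M)$; as written you silently identify the two.

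Your anticipated obstacle over finite fields is not genuine, and the shift by $\lambda$ can be avoided altogether. In the auxiliary lemma, first observe that $1_B\in r_B(M')=B$ already forces $1_B=1_B^m\in M'$. Now for arbitrary $b\in B$ run your own idempotent decomposition inside $k[b]$: write $b=be_b+n$ with $e_b\in k[b]$ idempotent, $be_b$ a unit in $k[b]e_b$, and $n=b(1_B-e_b)$ nilpotent. The span identity applied to the unit $be_b$ (using that $b^m=(be_b)^m\in M'$ for large $m$) yields $k[b]e_b\subseteq M'$, hence $be_b\in M'$; applied to the unit $1_B+n\in k[n]$ (using that $(1_B+n)^m\in M'$ for large $m$, since $1_B+n\in B=r_B(M')$) yields $k[n]\subseteq M'$, hence $n\in M'$. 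Thus $b=be_b+n\in M'$ with no appeal to a generic $\lambda$, and the corollary holds over every field $k$.
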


Not all MZ-spaces are ideals, as the following example shows:

\begin{example} 
The finite field $\F_4$ has the following four MZ-spaces:
    \begin{align*}
        \F_4 \qquad \{0\} \qquad \{0,x\} \qquad \{0,x+1\}.
    \end{align*}
\end{example}

\begin{lproof} 
    The sets $\{0\}$, $\{0,x\}$ and $\{0,x+1\}$ are the only $\Z$-linear subspaces of $\F_4$ that do not contain $1$.
    Since $\F_4^*$ is cyclic, we find that $r'(M) = \{0\}$ when $M$ is a $\Z$-linear subspace that does not contain $1$.
    It is clear that $r'(M) \subset sr(M)$, thus $M$ is an MZ-space.
\end{lproof}

\subsection{Partial fractions decomposition}

We have the partial fractions decomposition of rational maps:

\begin{theorem} \label{pfd} \label{gold} \emph{(Partial Fractions Decomposition)} 
Let $k$ be a field, $a_1,\ldots,a_n \in k$ disinct and $\alpha\in k^*$. 
Define $U(X) = \alpha(X-a_1)\cdots(X-a_n)$ and let $V(X)\in k[X]$ with $\deg_X V(X)<n$ be arbitrary. 
Then
    \begin{align*} \frac{V(X)}{U(X)} = \sum_{i=1}^n \frac{A_i}{X-a_i}\end{align*}
with $A_i = \frac{V(a_i)}{U'(a_i)}$.
\end{theorem}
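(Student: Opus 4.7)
The plan is to clear denominators and reduce the claim to an equality of two polynomials of degree less than $n$, which I can then verify by evaluating at the $n$ distinct points $a_1, \ldots, a_n$. Multiplying through by $U(X)$ transforms the claimed identity into
\begin{align*}
V(X) = \sum_{i=1}^n A_i \cdot \frac{U(X)}{X-a_i},
\end{align*}
where each $U(X)/(X-a_i) = \alpha \prod_{j \neq i}(X - a_j) \in k[X]$ is a genuine polynomial of degree $n-1$.

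First I would compute $U'(a_i)$. By the Leibniz rule applied to $U(X) = \alpha \prod_j (X-a_j)$, every summand of $U'(X)$ contains at least one factor $(X - a_j)$ with $j \neq i$, except the one in which the derivative falls on $(X-a_i)$. Evaluating at $X = a_i$, only that one term survives, so $U'(a_i) = \alpha \prod_{j \neq i}(a_i - a_j)$, which is nonzero since the $a_j$ are distinct. In particular the definition $A_i = V(a_i)/U'(a_i)$ makes sense.

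Next I would evaluate both sides of the cleared identity at $X = a_\ell$ for each $\ell \in \{1, \ldots, n\}$. The left-hand side gives $V(a_\ell)$. On the right-hand side, every summand with $i \neq \ell$ contains the factor $(a_\ell - a_\ell) = 0$, so only the term $i = \ell$ survives, producing
\begin{align*}
A_\ell \cdot \alpha \prod_{j \neq \ell}(a_\ell - a_j) = A_\ell \cdot U'(a_\ell) = V(a_\ell).
\end{align*}
Thus both sides agree at $n$ distinct points. Since each side is a polynomial in $X$ of degree at most $n-1$ (using $\deg_X V < n$ on the left and the explicit product formula on the right), their difference is a polynomial of degree less than $n$ with $n$ roots, hence identically zero.

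No serious obstacle arises; the statement is pinned down by the interpolation principle for polynomials of bounded degree. The only points meriting care are verifying that $U(X)/(X-a_i)$ is honestly a polynomial, so that clearing denominators is legitimate, and that $U'(a_i) \neq 0$, so that $A_i$ is well defined; both follow from the distinctness of the $a_i$.
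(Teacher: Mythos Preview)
Your proof is correct. It differs from the paper's argument in structure: the paper first sets up the $n$-dimensional $k$-vector space $\mathcal{V}_{U(X)}=\{V(X)/U(X)\mid \deg V<n\}$, proves that $\frac{1}{X-a_1},\ldots,\frac{1}{X-a_n}$ form a basis (by a linear-independence check), and only then determines the coefficients $A_i$ by multiplying through by $X-a_j$ and substituting $X=a_j$. You instead go straight to the target identity, clear denominators, and verify the resulting polynomial equality by interpolation at the $n$ points $a_1,\ldots,a_n$. Your route is shorter and avoids the basis argument entirely; the paper's route has the conceptual advantage of separating existence (some partial-fraction decomposition must exist, by dimension count) from the computation of the coefficients, which can be useful in settings where one wants existence before knowing the explicit form.
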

\begin{proof} 
Note first that $U'(a_i) \neq 0$ for all $i=1,\ldots,n$. 
This is since all $a_i$ are distinct and we know that if both $U(a_i)=0$ and $U'(a_i)=0$, then $a_i$ is a double root of $U$. 
Since this is not the case, and we know that $U(a_i)=0$, we must have $U'(a_i)\neq 0$ for all $i=1,\ldots,n$.

Then consider the set $\mathcal{V}_{U(X)} := \left\{ \frac{V(X)}{U(X)} \mid \deg V(X) <n\right\}$ as a subset of $k(X)$, the field of rational functions. 
Since $\deg V(X)<n$, we see that $\mathcal{V}_{U(X)}$ is spanned by the set $\left\{\frac{1}{U(X)}, \frac{X}{U(X)},\ldots, \frac{X^{n-1}}{U(X)}\right\}$.
It is clearly a $k$-linear subspace of $k(X)$.
Indeed, the elements $\frac{1}{U(X)}, \frac{X}{U(X)},\ldots, \frac{X^{n-1}}{U(X)}$ are $k$-linearly independent as well. 
Hence $\dim_k \mathcal{V}_{U(X)} = n$.

Now consider the elements $\frac1{X-a_1},\ldots, \frac1{X-a_n}$. 
One easily verifies that they belong to $\mathcal{V}_{U(X)}$. 
We show that they are linearly independent, and hence form a basis. 
Therefore, suppose that there exist $\lambda_1,\ldots,\lambda_n\in k$ such that
    \begin{align*} \sum_{i=1}^n \frac{\lambda_i}{X-a_i} = 0.\end{align*}
Then also
    \begin{align*} \lambda_1 + \sum_{i=2}^n \frac{\lambda_i(X-a_1)}{X-a_i} = 0.\end{align*}
By substituting $a_1$ for $X$, we find $\lambda_1=0$. 

Then $\sum_{i=2}^n \frac{\lambda_i}{X-a_i} = 0$ and we repeat this procedure. 
We find that indeed $\frac1{X-a_1},\ldots,\frac1{X-a_n}$ are linearly independent.

Since now these elements constitute a basis for $\mathcal{V}_{U(X)}$, given any $V(X)$ with $\deg V(X)<n$, we can write
    \begin{align*} \frac{V(X)}{U(X)} = \sum_{i=1}^n \frac{A_i}{X-a_i}\end{align*}
for some $A_1,\ldots,A_n\in k$. 
We now determine those $A_i$. 
By multiplying both sides with $(X-a_j)$ we get
    \begin{align*} \frac{V(X)}{U(X)} \cdot (X-a_j) = A_j + \sum_{i\neq j} \frac{A_i(X-a_j)}{X-a_i}.\end{align*}
Note that the $X-a_j$ cancels with the $X-a_j$ that occurs in $U(X)$. 
So we can substitute $a_j$ for $X$ and obtain
    \begin{align*} \frac{V(a_j)}{U'(a_j)} = A_j,\end{align*}
as required. 
Note that we used that $U'(a_j) = \alpha \prod_{i\neq j} (a_j-a_i)$. \qed
\end{proof}

\section{The Theorem of Duistermaat and Van der Kallen}

In this section we state the Theorem of Duistermaat and Van der Kallen in one dimension. 
We shall prove a more general theorem later on, adapting a technique by Monsky. 
Before we do that, we need to discuss some theorems, all of which are self-contained and needed for our proof. 
At the end, we shall state the Theorem of Duistermaat and Van der Kallen. 
The first necessary theorem is the Newton-Puiseux Theorem. 
(See: \cite{New}, \cite{Pui}, \cite{Pui2})

\begin{theorem}[Newton-Puiseux Theorem \cite{New}, \cite{Pui}, \cite{Pui2}] \label{newtonpuiseux} 
Let $k$ be an algebraically closed field of characteristic zero and $f(X) = \sum_{i=0}^n a_i(t) X^i \in k((t))[X]$ with $n:=\deg_X f(X)\geq 1$. 
Then there exists some $p\geq 1$ such that $f(X)$ splits completely in linear factors over $k((t^{1/p}))$.
\end{theorem}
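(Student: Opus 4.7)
The plan is to proceed by induction on $n = \deg_X f$. The base case $n = 1$ is trivial, since the unique root of $a_1(t)X + a_0(t)$ already lies in $k((t)) = k((t^{1/1}))$. For the inductive step, after dividing by $a_n(t)$ I may assume $f$ is monic. It then suffices to produce a single root $\alpha$ in some $k((t^{1/p_1}))$: for then $f = (X - \alpha)g(X)$ with $g$ monic of degree $n-1$ over $k((t^{1/p_1}))$, and applying the theorem inductively to $g$ (viewed over $k((s))$ with $s = t^{1/p_1}$) yields some $p_2$ over which $g$ splits, so $f$ splits over $k((t^{1/(p_1 p_2)}))$.

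To produce one root, I would use the Newton polygon of $f$. Let $v$ denote the $t$-adic valuation on $k((t))$, and consider the lower convex hull of the finite set of points $\{(i, v(a_i)) : a_i \neq 0\}$ in the plane. A standard ultrametric argument shows that any root $\alpha$ of $f$ in an algebraic closure of $k((t))$ must satisfy $v(\alpha) = -\sigma$ for some slope $\sigma$ of this polygon, since otherwise a single term of $f(\alpha) = 0$ would dominate all the others in valuation. Pick such a slope $\sigma = a/b$ in lowest terms (with $b \geq 1$), substitute $X = t^{a/b} Y$, and set $\tau = t^{1/b}$ so that we are working in $k((\tau))[Y]$. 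After factoring out the common minimal power of $\tau$ from all coefficients, I obtain a polynomial $g(\tau, Y) \in k[[\tau]][Y]$ whose reduction $\bar{g}(Y) := g(0, Y) \in k[Y]$ is a nonzero polynomial that records exactly the monomials of $f$ lying on the chosen segment.

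Since $k$ is algebraically closed, $\bar{g}$ has a root $c \in k$. In the favorable case where $c$ is a \emph{simple} root of $\bar{g}$, Hensel's lemma lifts $c$ to a root $\beta(\tau) \in k[[\tau]]$ of $g(\tau, Y)$, and then $\alpha := t^{a/b} \beta(\tau) \in k((t^{1/b}))$ is a root of $f$, finishing the inductive step. The main obstacle is the degenerate case in which every root of $\bar{g}$ has multiplicity at least $2$. Here I would translate $Y \mapsto Y + c$ for a multiple root $c$, obtaining a new polynomial whose Newton polygon I re-examine, and iterate. Characteristic zero is essential at this point: it ensures (via the nonvanishing of derivatives of the reduced equations) that after finitely many such translations a simple root appears in some reduction, so that Hensel's lemma finally applies and yields a root in $k((t^{1/p}))$ for an appropriate $p$. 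Careful bookkeeping of the denominators introduced at each iteration gives the uniform $p$ claimed in the statement; verifying termination of this iterative procedure is the technical heart of the argument.
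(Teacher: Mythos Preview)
The paper does not actually prove the Newton--Puiseux theorem: it is stated with references to the original sources (Newton, Puiseux) and then used as a black box in the proof of Lemma~\ref{yearofthecat}. So there is no in-paper argument to compare your proposal against.

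That said, your outline is the classical Newton-polygon proof and is broadly correct. The one place where you are honest about hand-waving --- termination of the iterative translation procedure when the reduction $\bar g$ has only multiple roots --- really is the crux, and your sketch does not yet contain the idea that makes it work. A clean way to close the gap: before starting, perform the Tschirnhaus substitution $X \mapsto X - a_{n-1}(t)/n$ (this is where characteristic zero enters) so that the coefficient of $X^{n-1}$ vanishes. Then if the Newton polygon consists of a single segment, the reduced polynomial $\bar g(Y)$ has degree $n$, vanishing $Y^{n-1}$-coefficient, and nonzero constant term; hence $\bar g$ cannot equal $(Y-c)^n$ for any $c \in k$, so some root of $\bar g$ has multiplicity strictly less than $n$ and one may split off a factor of smaller degree and induct. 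If the polygon has more than one segment, a Hensel-type factorisation already splits $f$ into lower-degree factors. Either way the degree drops, which is what guarantees termination --- not the mere appeal to ``nonvanishing of derivatives'' that you gesture at.
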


Next, we need a special ring for our proof:

\begin{definition} \label{spandauballet} 
Let $k$ be a field and $v$ a valuation on $k$ such that $k$ is complete with respect to $v$. 
Define $k[[X,X^{-1}]]$ to be the ring of formal series $\sum_{n=-\infty}^{\infty} c_nX^n$ such that $\lim_{|n|\to\infty} c_n = 0$. 
\end{definition}

\begin{proposition} \label{true} 
Let $k = \C((z^{1/p}))$ and $v$ the valuation of $k$ such that $v(z^{1/p}) = 1/p$. 
Then $\C[X,X^{-1}][[z]]$ is a subring of $k[[X,X^{-1}]]$.
\end{proposition}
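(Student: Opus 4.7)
The plan is to exhibit an injective ring homomorphism
\begin{align*}
\varphi \colon \C[X,X^{-1}][[z]] \longrightarrow k[[X,X^{-1}]]
\end{align*}
obtained by rearranging a double series. Write a generic element of $\C[X,X^{-1}][[z]]$ as $F = \sum_{m\geq 0} f_m(X)\, z^m$ with each $f_m(X)=\sum_n a_{m,n}X^n \in \C[X,X^{-1}]$ a Laurent polynomial, so that for each fixed $m$ only finitely many $a_{m,n}$ are nonzero. Set
\begin{align*}
\varphi(F) := \sum_{n \in \Z} c_n(z)\, X^n, \qquad c_n(z) := \sum_{m\geq 0} a_{m,n}\, z^m \in \C[[z]] \subset k.
\end{align*}

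The first step is to verify $\varphi(F)\in k[[X,X^{-1}]]$, i.e.\ that $v(c_n(z))\to \infty$ as $|n|\to\infty$. Fix $M\geq 0$. For each $m\leq M$ the support of $f_m$ in $X$ is contained in some interval $[-N_m,N_m]$. Taking $N:=\max_{m\leq M} N_m$, for $|n|>N$ the coefficient $a_{m,n}$ vanishes for every $m\leq M$, so $c_n(z)\in z^{M+1}\C[[z]]$ and hence $v(c_n(z))\geq M+1$. This provides the required decay.

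Next I would check $\varphi$ is a ring homomorphism. Additivity and the normalization $\varphi(1)=1$ are immediate from the definition. For multiplicativity, given $F=\sum_i f_i z^i$ and $G=\sum_j g_j z^j$ with $f_i=\sum_p a_{i,p}X^p$ and $g_j=\sum_q b_{j,q}X^q$, both $\varphi(FG)$ and $\varphi(F)\varphi(G)$ reduce to
\begin{align*}
\sum_{n\in\Z}\Bigl(\sum_{N\geq 0}\Bigl(\sum_{i+j=N}\sum_{p+q=n} a_{i,p}\, b_{j,q}\Bigr) z^N\Bigr) X^n,
\end{align*}
where the innermost sums over $(p,q)$ and $(i,j)$ are finite (the former since every $f_i$, $g_j$ has finite $X$-support). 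So no genuine convergence issue arises, and the two rearrangements agree. Finally, $\varphi(F)=0$ forces $c_n=0$ for every $n$, hence $a_{m,n}=0$ for all $m,n$, which shows $\varphi$ is injective.

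The main obstacle is the bookkeeping in the multiplicativity step — keeping track of the four indices $i,j,p,q$ and matching the two ways of expanding the product. Because $k$ is complete non-archimedean and the relevant inner sums are in fact finite, no analytic subtlety arises; once everything is laid out, the identity drops out and we may identify $\C[X,X^{-1}][[z]]$ with its image in $k[[X,X^{-1}]]$.
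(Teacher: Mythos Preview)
Your argument is correct and follows essentially the same route as the paper: the heart of both proofs is the observation that for $|n|$ large the coefficient $c_n(z)$ picks up contributions only from $a_m(X)$ with $m$ large, forcing $v(c_n)\to\infty$ (the paper phrases this via $|z^i|<\varepsilon$ for $i>N(\varepsilon)$, you phrase it directly as $c_n\in z^{M+1}\C[[z]]$). You go further than the paper by explicitly checking multiplicativity and injectivity of the embedding, which the paper leaves implicit.
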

\begin{proof} 
Note that $\C((z^{1/p}))$ is complete with respect to $v$, one could write $z^{1/p}=t$, then $k= \C((t))$ and $v(t)=1/p$. 
Hence $k[[X,X^{-1}]]$ is defined.

Let $f(z) = a_0(X) + a_1(X)z + a_2(X)z^2 +\ldots$ be an arbitrary element of $\C[X,X^{-1}][[z]]$ and let $\varepsilon>0$ be arbitrary. 
Write this element $f(z) = \sum_{n=-\infty}^{\infty} c_n X^n$. 

We want to determine $N$ such that $|c_n| < \varepsilon$ for all $|n|\geq N$. 
First, observe that $v(z^i) = iv(z) = i/p$.
So $|z^i| = 2^{-v(z^i)} = 2^{-i/p} < \varepsilon$ if $i > p\log_2(\frac1\varepsilon)$.

Write $N(\varepsilon):= \lceil p\log_2(\frac1\varepsilon) \rceil$. 
Then determine 
    \begin{align*}
        N:= \max_{0\leq i \leq N(\varepsilon)} \{ \deg a_i(X), \deg a_i(X^{-1})\}.
    \end{align*} 
Then, if $|n|>N$, all $z^i$ contributing to $c_n$ come from $a_i(X)$ with $i>N(\varepsilon)$.
Since $|z^i|<\varepsilon$ if $i>N(\varepsilon)$, it follows that $|c_n|<\varepsilon$.
Hence $|c_n|< \varepsilon$ if $|n|>N$.

Hence indeed, $f(z)\in k[[X,X^{-1}]]$, as required. \qed
\end{proof}

The theorem of Duistermaat and Van der Kallen is as follows:

\begin{theorem}[Duistermaat - Van der Kallen \cite{DuiKal}] 
Let $X_1,\ldots,X_n$ be $n$ commutative variables and let $M$ be the subspace of the Laurent polynomial algebra $\C[X_1,\ldots,X_n,X_1^{-1},\ldots,X_n^{-1}]$ consisting of those Laurent polynomials with no constant term:
    \begin{align*} M = \{ f \in \C[X_1,\ldots,X_n,X_1^{-1},\ldots,X_n^{-1}] \mid f_0 = 0 \}. \end{align*}
Then $M$ is an MZ-space of $\C[X_1,\ldots,X_n,X_1^{-1},\ldots,X_n^{-1}]$.
\end{theorem}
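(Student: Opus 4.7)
My plan is to show, for every $f \in r(M)$ and every Laurent polynomial $g$, that the formal generating function $\Phi_g(z) := \sum_{m \geq 0} (g f^m)_0 \, z^m$ is a polynomial in $z$; this exactly says $f \in sr(M)$, whence $M$ is an MZ-space. Formally $\Phi_g(z)$ is the coefficient of $X_1^0 \cdots X_n^0$ in $\frac{g(X)}{1 - z f(X)}$. The strategy is to pass to a completion in which $1 - z f$ factors in one of the variables, use partial fractions to extract the constant coefficient, and exploit the hypothesis on $f$ to constrain the result. The essential case is $n = 1$, which is the setting the excerpt prepares for; I focus on that and indicate iteration for $n>1$ at the end.

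In the one-variable case, write $f(X) = \sum_{k=-N}^{N} a_k X^k$. Then $X^N (1 - z f(X))$ is a polynomial of $X$-degree $2N$ over $\C[z]$; by Newton--Puiseux (Theorem~\ref{newtonpuiseux}) it factors over $\C((z^{1/p}))$ as $c(z) \prod_{i=1}^{2N}(X - \alpha_i(z))$, and a Newton-polygon calculation shows that $N$ of the $\alpha_i$ are ``small'' with $v(\alpha_i) > 0$ and $N$ are ``large'' with $v(\alpha_i) < 0$. By Proposition~\ref{true}, $\frac{g(X)}{1 - z f(X)}$ lies in $\C((z^{1/p}))[[X, X^{-1}]]$ (Definition~\ref{spandauballet}), and partial fractions (Theorem~\ref{gold}) give
\begin{align*}
\frac{g(X)}{1 - z f(X)} = P(X, z) + \sum_{i=1}^{2N} \frac{A_i(z)}{X - \alpha_i(z)},
\end{align*}
with $A_i(z) = g(\alpha_i(z))/U'(\alpha_i(z))$ (where $U = \prod_i(X - \alpha_i)$). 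Expanding each simple factor in the completed ring, small roots contribute $0$ to the $X^0$-coefficient and large ones contribute $-A_i/\alpha_i$, so
\begin{align*}
\Phi_g(z) = P(0, z) - \sum_{\alpha_i \text{ large}} \frac{g(\alpha_i(z))}{\alpha_i(z)\, U'(\alpha_i(z))}.
\end{align*}

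The hypothesis $f \in r(M)$ applied with $g = 1$ says precisely that $\Phi_1(z)$ is a polynomial in $z$. The hardest step, and the one where Monsky's technique will enter in the later, more general proof, is to propagate this from $g = 1$ to arbitrary $g$: the large/small partition and the derivative $U'(\alpha_i)$ are invariant under $\operatorname{Gal}(\C((z^{1/p}))/\C((z)))$, so each symmetric sum $\sum_{\alpha_i \text{ large}} \alpha_i(z)^k / (\alpha_i(z) U'(\alpha_i(z)))$ lies in $\C((z))$, and one must show that the Puiseux-asymptotic rigidity forced by polynomiality of $\Phi_1$ in fact upgrades every such sum to a polynomial in $z$. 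Once this is established, linearity of $g \mapsto \Phi_g$ together with the identification $\Phi_{X^k}(z) = \sum_m (f^m)_{-k} z^m$ give $(g f^m)_0 = 0$ for all $g$ and all $m \gg 0$. For $n > 1$ one can either iterate the argument one variable at a time (taking the constant coefficient in $X_n$ first, landing in $\C[X_1^{\pm}, \ldots, X_{n-1}^{\pm}]$, and inducting) or invoke the torus-residue argument of Duistermaat and Van der Kallen; the propagation step in the $n=1$ case is the main obstacle.
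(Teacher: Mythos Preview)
The paper does not prove this statement for general $n$; it explicitly writes ``We omit the proof, and prove a more general theorem, for dimension $n=1$'' (Theorem~\ref{moregeneral}). So the relevant comparison is between your one-variable sketch and the paper's proof of Theorem~\ref{moregeneral} via Lemmas~\ref{yearofthecat} and~\ref{alstewart}. Your setup through the partial-fractions expression for $\Phi_g(z)$ matches the paper's machinery, but then you try to pass directly from ``$\Phi_1(z)$ is a polynomial'' to ``$\Phi_g(z)$ is a polynomial for all $g$''. You correctly flag this propagation as the hardest step, and you do not carry it out: Galois invariance only places the symmetric sums over the large roots in $\C((z))$, not among the polynomials in $z$, and one polynomial constraint on $\Phi_1$ does not by itself force polynomiality of every power sum $\sum_{\text{large}} \alpha_i^k/(\alpha_i U'(\alpha_i))$. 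This is a genuine gap. (A minor side issue: when $f=\alpha X^{-s}+\cdots+\beta X^{r}$ the Newton polygon gives $s$ small roots and $r$ large roots, not $N$ and $N$.)

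The paper sidesteps propagation entirely. In Lemma~\ref{yearofthecat} it argues by contradiction: assume $f$ genuinely involves both positive and negative powers of $X$ and show that $W(z)=\sum_{m\ge 0}L(f^m)z^m$ cannot equal the nonzero constant $L(1)$. After the same partial-fractions reduction you set up, the decisive move is to substitute $t=1/z$, view the roots $a_i$ as Puiseux series in $t$, and use $f(a_i)=t$ together with \emph{both} $r\ge 1$ and $s\ge 1$ to force $w(a_i)=0$; a direct computation then gives $w(a_i'\,a_i^m)>1$ for every $m\in\Z$, so the expression for $W(z)/z$ has $w$-valuation strictly larger than $w(1/z)=1$, a contradiction. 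Hence $f\in\C[X]$ or $f\in\C[X^{-1}]$; Lemma~\ref{alstewart} then yields $f\in X\C[X]$ (or its mirror), after which $L(gf^m)=0$ for large $m$ is immediate. That structural conclusion about $f$ is precisely what replaces the propagation step you left open.
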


We omit the proof, and prove a more general theorem, for dimension $n=1$, in the next section, see Theorem \ref{moregeneral}. 

\section{Kernels of linear maps: A generalisation of Duistermaat - Van der Kallen in dimension one}

In this section we will discuss a generalisation of Duistermaat - Van der Kallen. 
Note that
    \begin{align*} \varphi\colon \C[X,X^{-1}] \to \C, f \mapsto f_0\end{align*}
is a linear map of $\C$-vector spaces for which $\varphi(X^N) = 0$ for all $|N|\geq 1$. 
The theorem of Duistermaat and Van der Kallen can thus be phrased as: $\Ker \varphi$ is an MZ-space.

We now investigate arbitrary $\C$-linear maps of a form, $L\colon \C[X,X^{-1}] \to \C$ or $L \colon \C[X] \to \C$.

\begin{remark} \begin{enumerate} 
\item If $L$ is injective, i.e., $\Ker L = 0$, then $\Ker L$ is an MZ-space.
\item If $L = 0$, i.e., the trivial linear map, then $\Ker L = \C[X,X^{-1}]$ (or indeed $\Ker L = \C[X]$), an MZ-space.
\end{enumerate}
\end{remark}

From now on, when we write $L$ is a linear map, we mean a non-trivial non-injective linear map. 

\begin{lemma} \label{lemma1} \label{enigma} 
Let $L \colon \C[X] \to \C$ be a $\C$-linear map for which there exists an $N\geq 1$ such that $L(X^n) = 0$ for all $n\in \Z_{\geq N}$. 
Then $\Ker L$ is an MZ-space of $\C[X]$ if and only if $L(1) \neq 0$.
\end{lemma}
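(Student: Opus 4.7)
The plan is to verify the characterization via the convenient sufficient condition $r'(M) \subset sr(M)$ stated after the definition of MZ-space (in fact, we will compute $r(M)$ explicitly and show $r(M) \subset sr(M)$). Throughout set $M = \Ker L$ and $\lambda_i := L(X^i)$ for $0 \leq i < N$, so that $\lambda_0 = L(1)$.

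For the easy direction ($\Rightarrow$), I would argue contrapositively. If $L(1) = 0$, then $1 \in M$. Since $L$ is non-trivial, $M \neq \C[X]$, so $M$ cannot be an MZ-space: indeed, if $M$ were MZ, then $1 \in r(M) = sr(M)$, which forces $b = b \cdot 1^m \in M$ for every $b \in \C[X]$ (taking $m$ large), contradicting $M \neq \C[X]$. This is the standard consequence of Zhao's Idempotency Theorem recorded right after Theorem \ref{zhao}.

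For the converse, assume $L(1) \neq 0$ and fix $a = c_0 + c_1 X + \cdots + c_d X^d \in r(M)$. The key computation is the shape of the coefficient $\alpha_i^{(m)} := [X^i]\,a^m$ for fixed $i < N$ and large $m$. Expanding by the multinomial theorem and writing $k_0 = m - (k_1 + \cdots + k_d)$, one obtains, for $m$ sufficiently large,
\begin{align*}
\alpha_i^{(m)} = c_0^m\,P_i(m), \qquad P_i(m) = \sum_{\substack{k_1+2k_2+\cdots+d k_d = i}} \frac{m(m-1)\cdots(m-s+1)}{k_1!\cdots k_d!}\,c_0^{-s} c_1^{k_1}\cdots c_d^{k_d},
\end{align*}
where $s = k_1+\cdots+k_d$. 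Each $P_i$ is a polynomial in $m$, with $P_0 \equiv 1$, and for $i \geq 1$ every tuple has $s \geq 1$, so $P_i(0) = 0$. Since $L$ kills $X^i$ for $i \geq N$, for all large $m$
\begin{align*}
0 = L(a^m) = c_0^m \sum_{i=0}^{N-1} P_i(m)\,\lambda_i = c_0^m\,Q(m).
\end{align*}
If $c_0 \neq 0$, the polynomial $Q$ would vanish for infinitely many $m$, hence identically; evaluating at $m = 0$ gives $\lambda_0 = L(1) = 0$, contradicting the hypothesis. Therefore $c_0 = 0$, i.e., $r(M) \subset (X)$. Conversely, if $a = X h(X)$, then $a^m = X^m h^m$ has all monomials of degree $\geq m$, so for any $b \in \C[X]$ and $m \geq N$ every monomial in $b a^m$ has degree $\geq N$; thus $L(b a^m) = 0$, which simultaneously shows $(X) \subset r(M)$ and $(X) \subset sr(M)$. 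Combining, $r(M) = (X) = sr(M)$, so $M$ is an MZ-space.

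The main technical step, and the only non-routine one, is establishing that $\alpha_i^{(m)}$ factors as $c_0^m P_i(m)$ with $P_i(0) = \delta_{i,0}$; everything else (the degree argument for $a \in (X)$, and the $1 \in M$ obstruction in the other direction) is immediate. The whole argument parallels the analytic idea of comparing generating functions of $L(a^m)$ but avoids any convergence discussion by working directly with the polynomial identity.
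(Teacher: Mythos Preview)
Your argument is correct, but it is genuinely different from the paper's. The paper never computes $r(\Ker L)$ directly: instead it observes that $(X^N)\subset\Ker L$, passes to the quotient $\C[X]/(X^N)$ via Proposition~\ref{cor224}, and then applies Zhao's Idempotency Theorem (Theorem~\ref{zhao}) to the finite-dimensional local ring $\C[X]/(X^N)$, whose only idempotents are $0$ and $1$; since $L(1)\neq 0$ rules out $1$, the MZ property follows immediately. Your route, by contrast, is a hands-on multinomial computation that pins down $r(\Ker L)=(X)$ exactly and then checks $(X)\subset sr(\Ker L)$ by a degree count, bypassing both the quotient step and Zhao's theorem entirely. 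The paper's proof is shorter once those tools are in hand and, as noted right after the lemma, transports verbatim to an arbitrary base field; your argument is more self-contained and actually identifies the radical, but the step ``$Q(m)=0$ for infinitely many integers $m$ forces $Q\equiv 0$'' uses characteristic zero, so the generalization to arbitrary $k$ is less automatic.
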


\begin{lproof} $\Rightarrow:)$ Suppose that $L(1)=0$, then $1\in \Ker L$. 
Since $\Ker L$ is an MZ-space of $\C[X]$, we find that $\Ker L = \C[X]$, i.e., $L=0$, a contradiction. 
So $L(1)\neq 0$.

$\Leftarrow:)$ Note that $L(X^N) = 0$. 
So for any $f= \sum_{i=0}^n a_iX^i$ we have
    \begin{align*} L(fX^N) = L\left(\sum_{i=0}^n a_i X^{i+N}\right) = \sum_{i=0}^n a_i L(X^{i+N}) = 0,\end{align*}
hence $(X^N) \subset \Ker L$. 
Then by Corollary \ref{cor224} it suffices to show that $\Ker L/(X^N)$ is an MZ-space of $\C[X]/(X^N)$. 
We will also use Zhao's Theorem. 
We therefore need to show that all elements in $r(\Ker L/ (X^N))$ are algebraic over $\C$ and that $\C[X]/(X^N)e \subset \Ker L/(X^N)$ for all idempotents $e$ in $\Ker L/(X^N)$. 

Note first that $\Ker L/(X^N) \subset \C[X]/(X^N)$, hence any idempotent of $\Ker L/(X^N)$ is an idempotent of $\C[X]/(X^N)$. 
We start by determining the idempotents of $\C[X]/(X^N)$. 
Since $(X) \subset \C[X]$ is a maximal ideal, we find that $\C[X]/(X^N)$ is a local ring. 
A local ring has only idempotents $0,1$. 
Since $L(1)\neq 0$, we see that $0$ is the only idempotent in $\Ker L/(X^N)$, and we trivially have $0 \subset \Ker L/(X^N)$.

Hence it remains to show that every element in $r(\Ker L/(X^N))$ is algebraic over $\C$. 
Note that \begin{align*} r(\Ker L/(X^N)) \subset \C[X]/(X^N),\end{align*} and every element of $\C[X]/(X^N)$ is algebraic over $\C$ due to its dimension being finite.

Since we now satisfy the hypotheses of Zhao's Theorem, we find that $\Ker L/(X^N)$ is an MZ-space of $\C[X]/(X^N)$ and hence $\Ker L$ is an MZ-space of $\C[X]$. \end{lproof}

Note that the above lemma can be generalized to an arbitrary field $k$, with an identical proof. 
The following is the generalization of the $1$-dimensional version of the theorem of Duistermaat and Van der Kallen.

\begin{theorem} \label{thm136} \label{moregeneral} 
Let $L \colon \C[X,X^{-1}] \to \C$ be a $\C$-linear map for which there exists an $N\geq 1$ such that $L(X^n) = 0$ for all $n\in \Z_{\geq N}$ and all $n\in \Z_{\leq - N}$. 
Then $\Ker L$ is an MZ-space of $\C[X,X^{-1}]$ if and only if $L(1)\neq 0$.
\end{theorem}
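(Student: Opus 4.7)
The plan is to handle the two directions separately. For $(\Rightarrow)$, if $L(1)=0$ then $1 \in \Ker L$, and the consequence of Zhao's Idempotency Theorem (stated after Theorem \ref{zhao}) forces $\Ker L = \C[X,X^{-1}]$, contradicting the non-triviality of $L$.

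For $(\Leftarrow)$, assume $L(1) \neq 0$ and pick $f \in r(\Ker L)$. To prove $\Ker L$ is an MZ-space it suffices to show $f \in sr(\Ker L)$, i.e.\ $L(X^k f^m) = 0$ for every $k \in \Z$ and every $m \gg 0$; equivalently, the generating series
\[
    \Phi_k(z) := \sum_{m \geq 0} L(X^k f^m)\, z^m
\]
should lie in $\C[z]$ for every $k$ (it does for $k = 0$ by hypothesis). The route I will follow is a Monsky-style partial fractions analysis. Writing $f = \sum_{i=-a}^{b} c_i X^i$ with $c_{-a}, c_b \neq 0$ (the main case being $a, b \geq 1$), form $Q(X,z) := X^a(1 - zf(X)) \in \C[z][X]$, a polynomial of degree $a + b$ in $X$. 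By the Newton-Puiseux theorem (Theorem \ref{newtonpuiseux}), $Q$ splits into linear factors over $k := \C((z^{1/p}))$ for some $p \geq 1$, and a Newton-polygon count separates the roots into $a$ small roots $\alpha_i$ (with $v(\alpha_i) > 0$) and $b$ large roots $\beta_j$ (with $v(\beta_j) < 0$). Partial fractions (Theorem \ref{pfd}) then gives
\[
    \frac{1}{1-zf(X)} = \sum_i \frac{A_i}{X-\alpha_i} + \sum_j \frac{B_j}{X-\beta_j}
\]
in $k(X)$, which transfers to $k[[X,X^{-1}]]$ via Proposition \ref{true} upon expanding the small-root summands in descending and the large-root summands in ascending powers of $X$. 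Applying $L$ coefficient-wise in $X$ (with $\ell_j := L(X^j)$ and $\Psi(y) := \sum_{|j|<N} \ell_j y^{-j}$) yields the explicit formula
\[
    \Phi_k(z) = \sum_i A_i\, P_k^-(\alpha_i) - \sum_j B_j\, P_k^+(\beta_j),
\]
where $P_k^-(y)$ and $P_k^+(y)$ are respectively the non-negative- and negative-power parts of $y^{k-1}\Psi(y)$, so that $P_k^-(y)+P_k^+(y) = y^{k-1}\Psi(y)$.

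The hard part will be bootstrapping from $\Phi_0 \in \C[z]$ to $\Phi_k \in \C[z]$ for all $k$. My idea is to exploit that the symmetric power sums $\sum_r C_r r^\ell$ over all roots $r$ of $Q$ (with $C_r \in \{A_i, B_j\}$) equal residues of $X^{a+\ell}/Q(X,z)$ at $X = 0$ and $X = \infty$, and hence belong to $\C[z, z^{-1}]$ with explicit $z$-valuation bounds depending on $\ell$ and $c_{-a}, c_b$. Combined with $y^{k-1}\Psi(y) = P_k^-(y)+P_k^+(y)$, this should express $\Phi_k$ as a $\C[z,z^{-1}]$-linear combination of $\Phi_0$ together with such residue sums; the hypothesis $\ell_0 = L(1) \neq 0$ is precisely what should make the relevant Vandermonde-style relation invertible, forcing the negative-$z$-valuation part of $\Phi_k$ to vanish. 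I expect the remaining technicalities — ensuring intermediate sums lie in $\C((z))$ rather than $\C((z^{1/p}))$ (resolved by Galois invariance of the sets of small and large roots), the degenerate cases $a = 0$ or $b = 0$ (which should reduce to Lemma \ref{enigma} via a change of variable $X \leftrightarrow X^{-1}$), and the case of repeated roots of $Q$ (handled by higher-order partial fractions or a perturbation argument within $r(\Ker L)$) — to be routine bookkeeping rather than the main obstacle.
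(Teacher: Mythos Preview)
Your Monsky set-up (Newton--Puiseux factorisation of $X^a(1-zf)$, partial fractions, expansion of $(X-\alpha_i)^{-1}$ and $(X-\beta_j)^{-1}$ in $k[[X,X^{-1}]]$) is exactly what the paper does in Lemma~\ref{yearofthecat}, but the paper uses it for a different purpose. It does \emph{not} try to show directly that every $\Phi_k$ is a polynomial. Instead it assumes $a,b\geq 1$ and $f\in r'(\Ker L)$ (so $\Phi_0\equiv L(1)$), performs a \emph{second} Newton--Puiseux expansion of the roots $a_i$ at $z=\infty$ (i.e.\ in the variable $t=1/z$), shows that each root then satisfies $w(a_i)=0$ and hence $w(a_i'a_i^{m})>1$ for every integer $m$, and concludes that the partial-fractions expression for $\Phi_0(z)/z$ has $t$-valuation $>1$, contradicting $\Phi_0(z)/z=L(1)t$. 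Thus the two-sided case is \emph{vacuous}: no $f\in r(\Ker L)$ can have $a,b\geq 1$. After this reduction, the one-sided case is handled by Lemma~\ref{alstewart}: one shows $r(\Ker L|_{\C[X]})=(X)$, whence $f\in X\C[X]$ and $L(gf^m)=0$ for $m\geq N-\deg_-g$ trivially.

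Your proposed ``bootstrapping'' step is where the real gap lies. The formula you obtain for $\Phi_k$ involves sums over the small roots and over the large roots \emph{separately}; the full symmetric sums $\sum_r C_r r^{\ell}$ are indeed in $\C(z)$, but that does not control the split sums, and no Vandermonde-type identity relates $\Phi_k$ to $\Phi_0$ in the way you sketch. Knowing only that $\Phi_k$ is an algebraic element of $\C[[z]]$ gives no polynomiality conclusion; what is missing is precisely the asymptotic information at $z=\infty$ that the paper extracts via the second Puiseux expansion. Two smaller points: the roots of $Q$ are automatically simple (from $f'(a_i)a_i'=-z^{-2}$ one gets $U'(a_i)=-za_i^sf'(a_i)\neq 0$), so no perturbation is needed; and the degenerate case $a=0$ does not reduce to Lemma~\ref{enigma} alone, since that lemma only gives $gf^m\in\Ker L$ for $g\in\C[X]$ --- you still need the extra step $f\in X\C[X]$ (Lemma~\ref{alstewart}) to handle $g\in X^{-k}\C[X]$.
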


The proof of this theorem relies on the following lemmas and is based on the proof of the $1$-dimensional Duistermaat-Van der Kallen theorem by Monsky \cite{Monskyproof}, \cite{monskylemma} (Lemmas 3.6 and 3.7).

\begin{lemma} \label{yearofthecat} 
Let $L \colon \C[X,X^{-1}] \to \C$ be a $\C$-linear map for which there exists an $N\geq 1$ such that $L(X^n) = 0$ for all $n\in \Z_{\geq N}$ and all $n\in \Z_{\leq - N}$. 
If $L(1)\neq 0$ and $f\in r'(\Ker L)$, then $f\in \C[X]$ or $f\in \C[X^{-1}]$.
\end{lemma}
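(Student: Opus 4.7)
The plan is to proceed by contradiction. Suppose $f \in r'(\Ker L)$, $L(1) \neq 0$, but $f \notin \C[X]$ and $f \notin \C[X^{-1}]$: write $f = \sum_{i=-s}^{t} a_i X^i$ with $a_{-s}\,a_t \neq 0$ and $s, t \geq 1$, and set $g(X) := X^s f(X) \in \C[X]$, a polynomial of degree $n := s+t$ with $g(0) = a_{-s} \neq 0$ and leading coefficient $a_t$. The central device is the generating function
\begin{equation*}
G(X, z) := \frac{1}{1 - zf(X)} = \sum_{m \geq 0} z^m f(X)^m \in \C[X,X^{-1}][[z]].
\end{equation*}
Applying Newton-Puiseux (Theorem \ref{newtonpuiseux}) to $X^s - zg(X) \in \C((z))[X]$ yields some $p \geq 1$ and a factorization $X^s - zg(X) = -za_t \prod_{i=1}^{n}(X - \beta_i)$ over $k := \C((z^{1/p}))$. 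A Newton-polygon analysis produces $s$ \emph{small} roots with valuation $1/s$ (tending to $0$ as $z \to 0$) and $t$ \emph{large} roots with valuation $-1/t$ (tending to $\infty$); by Proposition \ref{true}, $G$ is an element of the complete ring $k[[X, X^{-1}]]$ of Definition \ref{spandauballet}.

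Partial fractions (Theorem \ref{pfd}) in $k(X)$ give
\begin{equation*}
G = \frac{X^s}{X^s - zg(X)} = \sum_{i=1}^{n} \frac{A_i}{X - \beta_i}, \qquad A_i = \frac{\beta_i^s}{U'(\beta_i)},\ U := X^s - zg(X).
\end{equation*}
Expanding each $1/(X - \beta_i)$ in $k[[X, X^{-1}]]$---as $\sum_{k \geq 0}\beta_i^{k}X^{-k-1}$ for small $\beta_i$ and $-\sum_{k \geq 0}\beta_i^{-k-1}X^k$ for large $\beta_i$ (both convergent in the sense of Definition \ref{spandauballet})---one reads off $[X^n]G = -\sum_{i\text{ large}} A_i\beta_i^{-n-1}$ for $n \geq 0$ and $[X^n]G = \sum_{i\text{ small}} A_i\beta_i^{-n-1}$ for $n \leq -1$. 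Extending $L$ coefficient-wise to $k[[X, X^{-1}]]$ (unambiguous because only $|n| < N$ contribute), the hypothesis $f^m \in \Ker L$ for all $m \geq 1$ gives $L(G) = L(1) + \sum_{m \geq 1} z^m L(f^m) = L(1)$; substituting produces the key identity, in $k$,
\begin{equation*}
L(1) = -\sum_{i\text{ large}} A_i\, P_+(\beta_i) + \sum_{i\text{ small}} A_i\, P_-(\beta_i),\qquad (\ast)
\end{equation*}
with $P_+(X) := \sum_{n=0}^{N-1} L(X^n) X^{-n-1}$ and $P_-(X) := \sum_{m=1}^{N-1} L(X^{-m}) X^{m-1}$.

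The main obstacle is extracting a contradiction from $(\ast)$. The left side is the nonzero constant $L(1) \in \C$, whereas the right side is \emph{a priori} a Puiseux series in $z^{1/p}$ whose structure is controlled by the leading asymptotics $\beta_i \sim \omega_t^j (za_t)^{-1/t}$, $A_i \sim -\beta_i/t$ on the large side (and analogous formulas $\beta_i \sim \omega_s^j (za_{-s})^{1/s}$, $A_i \sim \beta_i/s$ on the small side). A direct leading-order computation reproduces $L(1)$ from the large-root sum and $0$ from the small-root sum, so $(\ast)$ becomes a vanishing condition on every higher-order Puiseux coefficient. The plan, following Monsky \cite{Monskyproof, monskylemma}, is to use the Galois action of $\mathrm{Gal}(k/\C((z)))$---which cycles the $t$ large roots among themselves and separately cycles the $s$ small roots---to conclude that each of the two sums in $(\ast)$ is Galois-invariant and therefore lies in $\C((z))$, then expand $\beta_i$ and $A_i$ to successive orders in $z$ and extract algebraic identities among the coefficients $a_i$ and the values $L(X^k)$ for $|k| < N$. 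The hard part is to show that this system of identities is inconsistent precisely when both $s \geq 1$ and $t \geq 1$, while being consistent for $s = 0$ or $t = 0$ (matching Lemma \ref{enigma}); this bookkeeping is patterned on Monsky's Lemma 3.6.
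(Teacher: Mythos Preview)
Your setup through the identity $(\ast)$ is correct and essentially identical to the paper's: generating function, Newton--Puiseux factorisation of $X^s(1-zf)$, partial fractions, expansion of each $(X-\beta_i)^{-1}$ in $k[[X,X^{-1}]]$ according to the sign of $\nu(\beta_i)$, and coefficient-wise extension of $L$. Where you diverge is precisely at the step you label ``the hard part,'' and there your proposal is a genuine gap rather than a different route.

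The paper does \emph{not} attack $(\ast)$ order by order in $z$. Instead it makes two moves you omit. First, differentiating $f(a_i)=1/z$ gives $f'(a_i)a_i'=-z^{-2}$, whence $A_i=-1/(zf'(a_i))=z\,a_i'$; this turns $(\ast)$ into
\[
\sum_{i\in S^+}\sum_{m=0}^{N-1} a_i' a_i^{m}\,L(X^{-(m+1)})\;-\;\sum_{i\in S^-}\sum_{m=0}^{N-1} a_i' a_i^{-(m+1)}\,L(X^{m})\;=\;\frac{L(1)}{z}.
\]
Second, since each $a_i$ is algebraic over $\C(z)=\C(t)$ with $t=1/z$, one applies Newton--Puiseux \emph{again} at $z=\infty$ and reads each $a_i$ in $\C((t^{1/q}))$. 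From $f(a_i)=t$ one checks $w(a_i)=0$, and because $a_i\notin\C$ its $t$-expansion has a first positive exponent $j/q>0$; the chain rule then gives $w(a_i')=w(-t^2\,da_i/dt)=1+j/q>1$. Hence every term $a_i' a_i^{m}$ on the left has $w$-valuation $>1$, whereas $L(1)/z=L(1)\,t$ has $w$-valuation exactly $1$. That is the contradiction.

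Your plan---expand in $z$ near $0$, use the Galois action, and ``extract algebraic identities''---cannot close the argument as stated. You yourself observe that the leading order of the right side of $(\ast)$ already equals $L(1)$, so there is no contradiction at order $0$; and nothing in your sketch explains which higher-order coefficient is forced to be nonzero or why. The point is that the right side of $(\ast)$ is not merely a Puiseux series in $z$ but an \emph{algebraic} function of $z$, and the obstruction to it being the constant $L(1)$ is visible only at $z=\infty$. Without the substitution $t=1/z$ (and the preliminary rewriting $A_i=z a_i'$ that makes the $t$-valuation of each summand transparent), the bookkeeping you allude to does not terminate.
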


\begin{lproof} 
Suppose $f\not\in \C[X]$ and $f\not\in \C[X^{-1}]$, i.e. $f = \alpha X^{-s} + \ldots + \beta X^{r}$ with $\alpha,\beta\neq 0$ and $s,r\geq 1$. 
Then consider the power series
    \begin{align*} W(z) = \sum_{j=0}^{\infty} L(f^j) z^j.\end{align*}
We will show that $W(z) \neq 1$. 
That is, $L(f^m) \neq 0$ for some $m\geq 1$. 
This contradicts $f\in r'(\Ker L)$.

Write $U(X) = X^s(1-zf(X)) \in \C(z)[X] \subset \C((z))[X]$ and $n:= r+s$. 
Then by the Newton-Puiseux Theorem (see Theorem \ref{newtonpuiseux}) there exists some $p\geq 1$ such that
    \begin{align*} U(X) = (-\beta z)(X-a_1)\cdots(X-a_n),\ \text{with all } a_i\in \C((z^{\frac1p})).\end{align*}
Write $k := \C((z^{\frac1p}))$. 
There we have the valuation $\nu$ defined by $\nu(z^{\frac1p}) = \frac1p$ and $k$ is complete with respect to this valuation. Furthermore $k[[X,X^{-1}]]$ is a ring, by Theorem \ref{spandauballet}. 
Extend $L$ to $k[[X,X^{-1}]]$ as
    \begin{align*} L \colon k[[X,X^{-1}]] \to k\end{align*}
by 
\begin{align*}L\left(\sum_{n=-\infty}^{\infty} c_nX^n\right) = \sum_{n=-(N-1)}^{N-1} c_n L(X^n).\end{align*}

Since $\C[X,X^{-1}][[z]]$ is a subring of $k[[X,X^{-1}]]$ (see Proposition \ref{true}), we have
    \begin{align*} \sum_{m\geq 0} f^m z^m \in k[[X,X^{-1}]].\end{align*}
Then $1-zf$ is invertible in $k[[X,X^{-1}]]$ and so is $U(X)$. 
Write
    \begin{align*} W(z) = L\left(\frac{1}{1-zf}\right) = L\left(\frac{X^s}{U(X)}\right).\end{align*}
We will now use the partial fractions decomposition of $X^s/U(X)$. 
Therefore, note that since we have $(-1)^n a_1\cdots a_n = \alpha/\beta$, we find that $a_i\neq 0$ for all $i=1,\ldots,n$. 
From $U(a_i)=0$ we find then that $f(a_i)=1/z$ for all $i=1,\ldots,n$.

So $f'(a_i)a_i' = -z^{-2}$. 
Also $U'(X) = sX^{s-1}(1-zf(X)) + X^s(-z)f'(X)$ and $U'(a_i) = -za_i^sf'(a_i) \neq 0$. 
Hence all $a_i$ are distinct. 
Then by Theorem \ref{gold} we get a partial fractions decomposition of the form
    \begin{align*} \frac{X^s}{U(X)} = \sum_{i=1}^n \frac{A_i}{X-a_i}\end{align*}
with $A_i = \frac{a_i^s}{U'(a_i)} = -\frac1{zf'(a_i)}$. 
Therefore
    \begin{align} \frac1{1-zf(X)} = \frac{X^s}{U(X)} = \sum_{i=1}^n - \frac1{zf'(a_i)(X-a_i)}.\end{align}
We now compute the inverse of each factor $X-a_i$ in $k[[X,X^{-1}]]$. 
Observe that $f(a_i) = 1/z$ implies that $\nu(a_i)\neq 0$. Then $\nu(a_i)>0$ or $\nu(a_i)<0$.

If $\nu(a_i)>0$, then we have
    \begin{align*} (X-a_i)^{-1} = X^{-1}(1-a_iX^{-1})^{-1} = X^{-1} \sum_{m=0}^{\infty} a_i^m X^{-m} \in k[[X,X^{-1}]]\end{align*}
while if $\nu(a_i)<0$, then
    \begin{align*} (X-a_i)^{-1} = -a_i^{-1}(1-a_i^{-1}X)^{-1} = -a_i^{-1} \sum_{m=0}^{\infty} (a_i^{-1}X)^m \in k[[X,X^{-1}]].\end{align*}

So we find that 
    \begin{align*}
        \frac1{1-zf(X)} & =  - \left( \sum_{i\in S^+} \frac1{zf'(a_i)(X-a_i)} + \sum_{i\in S^{-}} \frac1{zf'(a_i)(X-a_i)} \right) \\
        & =  - \left( \sum_{i\in S^+} \left( \frac1{zf'(a_i)}\right)\left( \sum_{m=0}^{\infty} a_i^m X^{-m}\right) X^{-1} - \sum_{i\in S^-} \left( \frac{1}{zf'(a_i)}\right) \left(\sum_{m=0}^{\infty} (a_i^{-1}X)^m \right) a_i^{-1} \right)
    \end{align*}
where $S^+ = \{ i \mid v(a_i) > 0\}$ en $S^- = \{i \mid v(a_i) <0 \}$.

Then \begin{align*}
        W(z) & =  L\left(\frac1{1-zf(X)}\right) \\
            & =  L\left( -\sum_{i\in S^+} \left( \frac1{zf'(a_i)}\right)\left( \sum_{m=0}^{\infty} a_i^m X^{-m}\right) X^{-1}\right) + L\left(\sum_{i\in S^-} \left( \frac{1}{zf'(a_i)}\right) \left(\sum_{m=0}^{\infty} (a_i^{-1}X)^m \right) a_i^{-1} \right) \\
            & =  -\sum_{i\in S^+}  \frac1{zf'(a_i)}  L\left(X^{-1}\sum_{m=0}^{\infty} a_i^mX^{-m}\right) + \sum_{i\in S^-} \frac{1}{zf'(a_i)} L\left(a_i^{-1} \sum_{m=0}^{\infty} (a_i^{-1}X)^m \right) \\
            & =  - \sum_{i\in S^+} \frac1{zf'(a_i)} \sum_{m=0}^{N-1} a_i^m L(X^{-(m+1)}) + \sum_{i\in S^-} \frac1{zf'(a_i)} \sum_{m=0}^{N-1} a_i^{-(m+1)} L(X^m)
    \end{align*}
Since $f'(a_i)a_i'=-z^{-2}$ we have 
    \begin{align*} W(z) = \sum_{i\in S^+} \sum_{m=0}^{N-1} z a_i' a_i^m L(X^{-(m+1)}) - \sum_{i\in S^-} \sum_{m=0}^{N-1} za_i'a_i^{-(m+1)} L(X^m)\end{align*}
We want to show $W(z)\neq 1$, i.e. we need to show that 
    \begin{align} \label{temporaryequation} \sum_{i\in S^+} \sum_{m=0}^{N-1}  a_i' a_i^m L(X^{-(m+1)}) - \sum_{i\in S^-} \sum_{m=0}^{N-1} a_i'a_i^{-(m+1)} L(X^m) \neq \frac1z \end{align}
To prove this inequality, we study the $a_i$ at infinity, i.e. we set $t= \frac1z$. 
Now fix an $i$. 
Then $f(a_i)=t$. 
Since $\C(z)=\C(t)$ we find that $a_i$ is algebraic over $\C(t)$ and hence also over $\C((t))$. 
Then again by the Newton-Puiseux theorem we can regard $a_i$ inside $\C((t^{\frac1p}))$ for some $p\geq 1$. 
Since $a_i\neq 0$ we can write $a_i = \sum_{n=m}^{\infty} c_nt^{n/p}$ for some $c_i \in \C$ with $c_m \neq 0$. 
Write $w$ for the valuation on $\C((t^{1/p}))$ defined by $w(t^{1/p}) = 1/p$. Then $w(a_i) = m/p$.

Suppose that $w(a_i)>0$, then 
    \begin{align*}
    w(f(a_i)) = w(\alpha c_m^{-s} t^{-ms/p}) = -ms/p = -s w(a_i) <0
    \end{align*}
 since $s\geq 1$. 
 But since $f(a_i)=t$ we also have $w(f(a_i))= w(t)=1$, a contradiction.

Similarly, if $w(a_i)<0$, then $w(f(a_i)) = w(\beta c_m^r t^{mr/p}) = mr/p = w(a_i)r < 0$, a contradiction.

Hence $w(a_i)=0$ and $a_i = \sum_{n=0}^{\infty} c_nt^{n/p}$ with $c_0\in \C^*$. 

Note that $t = \frac1z$, and $w(t)=1$. 
So if we show that $w(a_i'a_i^m)>1$ for all $m\in \Z$ we have shown our inequality \ref{temporaryequation}, and hence that $W(z)\neq 1$. 
Note that since $f(a_i)=t$ we have $a_i\not\in\C$. 
Hence there exists some $j>0$ with $c_j\neq 0$. 
Choose such $j$ minimal and write $a_i = c_0 + c_j t^{j/p} + R$ with $w(R) > j/p$. 
Then $w\left(\frac{da_i}{dt}\right) = \frac j p -1$.

Also $\frac{a_i'(z)}{a_i(z)} = -\frac{\frac{da_i}{dt}}{a_i}t^2$ and $w(a_i) = 0$, therefore we have
    \begin{align*} w(a_i'a_i^m) = w(-\frac{d a_i}{dt} t^2a_i^m) = \frac{j}p - 1 + 2 > 1,\end{align*}
which concludes the proof. 
\end{lproof}

\begin{lemma} \label{alstewart} 
Let $L \colon \C[X,X^{-1}] \to \C$ be a $\C$-linear map for which there exists an $N\geq 1$ such that $L(X^n) = 0$ for all $n\in \Z_{\geq N}$ and all $n\in \Z_{\leq - N}$. 
If $L(1) \neq 0$ and $f\in \C[X] \cap r(\Ker L)$, then $f\in X\C[X]$.
\end{lemma}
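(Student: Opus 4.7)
The plan is to restrict $L$ to $\C[X]$ and then pass to the finite-dimensional quotient $\C[X]/(X^N)$, where Corollary \ref{suchashame} upgrades the MZ-space property to the statement that the radical is actually an ideal. Since $\C[X]/(X^N)$ is a local Artinian ring with a completely transparent ideal lattice, it is then easy to read off that $\bar f$ must lie in the maximal ideal.

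First I would observe that $L|_{\C[X]} \colon \C[X] \to \C$ satisfies the hypotheses of Lemma \ref{enigma}: we have $L|_{\C[X]}(1) = L(1) \neq 0$ and $L|_{\C[X]}(X^n) = 0$ for all $n \geq N$. Therefore $\Ker L|_{\C[X]} = \Ker L \cap \C[X]$ is an MZ-space of $\C[X]$, and it contains the ideal $(X^N)$ because $L(gX^N)=0$ for every $g\in\C[X]$. By Proposition \ref{cor224}, the image $\Ker L|_{\C[X]}/(X^N)$ is an MZ-space of $\C[X]/(X^N)$. Since $\C[X]/(X^N)$ is a finite-dimensional $\C$-algebra, every element is algebraic over $\C$, and Corollary \ref{suchashame} then yields that $r\bigl(\Ker L|_{\C[X]}/(X^N)\bigr)$ is an ideal of $\C[X]/(X^N)$.

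Next, for the given $f \in \C[X] \cap r(\Ker L)$, I would note that $f^m \in \Ker L$ for all $m \gg 0$, and because $f \in \C[X]$ in fact $f^m \in \Ker L \cap \C[X] = \Ker L|_{\C[X]}$. Passing to the quotient, $\bar f \in r\bigl(\Ker L|_{\C[X]}/(X^N)\bigr)$, which we just established is an ideal. Moreover this ideal is proper: if $1$ were in it, then $1 \in \Ker L/(X^N)$, hence $1 \in \Ker L$, contradicting $L(1) \neq 0$. The ring $\C[X]/(X^N)$ is local, with ideals forming the chain $(0) \subset (X^{N-1})/(X^N) \subset \cdots \subset (X)/(X^N) \subset \C[X]/(X^N)$, so every proper ideal lies in the maximal ideal $(X)/(X^N)$. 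Therefore $\bar f \in (X)/(X^N)$, i.e. $f \in (X) + (X^N) = (X)$, which is what we wanted.

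I do not expect any real obstacle: the argument is a mechanical combination of Lemma \ref{enigma}, Proposition \ref{cor224}, and Corollary \ref{suchashame} with the elementary ideal structure of the local Artinian ring $\C[X]/(X^N)$. The only conceptual step is recognising that restricting to $\C[X]$ puts us back into the setting of Lemma \ref{enigma}, which is precisely why that lemma was proved first.
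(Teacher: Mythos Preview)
Your proof is correct and follows essentially the same route as the paper: restrict $L$ to $\C[X]$, invoke Lemma~\ref{enigma} and Corollary~\ref{suchashame} to see that the relevant radical is a proper ideal, and then identify it with $(X)$. The only cosmetic difference is that you carry out the last two steps inside the local Artinian quotient $\C[X]/(X^N)$ (where the algebraicity hypothesis of Corollary~\ref{suchashame} is immediate and every proper ideal sits in $(X)/(X^N)$), whereas the paper argues directly in $\C[X]$, using that it is a PID and that $X$ already lies in the radical.
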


\begin{lproof} 
Let $f\in \C[X] \cap r(\Ker L)$, then $f\in r(\Ker L_{\mid \C[X]})$. 
By Corollary \ref{suchashame} and Lemma \ref{enigma} we know that $r(\Ker L_{\mid \C[X]})$ is an ideal of $\C[X]$. 
Since $L(1)\neq 0$ we find that $\Ker L_{\mid \C[X]} \neq \C[X]$, and hence $r(\Ker L_{\mid \C[X]}) \neq \C[X]$. 
As $r(\Ker L_{\mid \C[X]})$ is a principal ideal ($\C[X]$ is Euclidean), it is generated by some monic $g \in \C[X]\setminus \C$.
Since it is clear that $X\in r(\Ker L_{\mid \C[X]})$ we find $(X) \subset r(\Ker L_{\mid \C[X]})$.  
Hence $X$ is a multiple of this $g$. 
Hence $g=X$. 
So $r(\Ker L_{\mid \C[X]}) = (X)$ and $f\in (X)=X\C[X]$. 
\end{lproof}

We will use the following definition(s):

\begin{definition} 
For $f\in \C[X,X^{-1}]\setminus\{0\}$ we have the following quantities:
    \begin{align*} \deg_+ f = \max\{ i : f_i \neq 0 \}\end{align*}
and
    \begin{align*} \deg_- f = \min\{ i : f_i \neq 0 \}.\end{align*}
\end{definition}

\begin{proof} \emph{(of Theorem \ref{thm136})} $\Rightarrow:)$ Identical to the proof of Lemma \ref{lemma1}.

$\Leftarrow:)$ Suppose that $L(1)\neq 0$ and $f\in r(\Ker L)$. 
We want to show that $gf^m \in \Ker L$ for all $m \gg 0$. 
By Lemma \ref{yearofthecat} we find that $f^N\in \C[X]$ or $f^N\in \C[X^{-1}]$ for some $N\geq 1$, but then also $f\in \C[X]$ or $f\in \C[X]$.

Assume that $f\in \C[X]$, the argument is similar when $f\in \C[X^{-1}]$. 
Then $f\in \C[X] \cap r(\Ker L)$. By Lemma \ref{alstewart} we then find $f\in X\C[X]$. 
Hence $L(f^m) = 0$ for all $m\geq N$. 

Then $L(gf^m) = 0$ for all $m \geq N - \deg_- g$. \qed
\end{proof}

Theorem \ref{thm136} is false in characteristic $p>0$, as was shown in \cite{Wil}.

\section{Encore: Generalizations}

In this section we list several generalizations of various results we have seen in this paper. 
The proofs can be found in \cite{Thesis}.

\begin{proposition} \emph{(Generalization of Lemma \ref{lemma1})} 
Let $L\colon \C[X_1,\ldots,X_n] \to \C$ be a $\C$-linear map for which there exists an $N\geq 1$ such that $(X_1,\ldots,X_n)^l \subset \Ker L$ for all $l \geq N$. 
Then $\Ker L$ is an MZ-space of $\C[X_1,\ldots,X_n]$ if and only if $L(1) \neq 0$. 
\end{proposition}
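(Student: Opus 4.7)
The proof will mirror Lemma \ref{enigma} almost verbatim, with the ideal $(X)$ replaced by $(X_1,\ldots,X_n)$ and with the observation that $\C[X_1,\ldots,X_n]/(X_1,\ldots,X_n)^N$ is still a local Artinian $\C$-algebra. The two ingredients used there, namely Corollary \ref{cor224} and Zhao's Idempotency Theorem (Theorem \ref{zhao}), apply verbatim in the multivariate situation; no new analytic input is required.

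For the direction $(\Rightarrow)$, I would simply repeat the argument from Lemma \ref{enigma}. If $L(1)=0$, then $1\in \Ker L$, so the remark following Zhao's Idempotency Theorem forces $\Ker L = \C[X_1,\ldots,X_n]$, contradicting the standing assumption that $L$ is non-trivial.

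For the direction $(\Leftarrow)$, set $\m{m} = (X_1,\ldots,X_n)$ and $I = \m{m}^N$. The hypothesis gives $I\subset \Ker L$ (taking $l=N$), so by Corollary \ref{cor224} it suffices to prove that $\Ker L/I$ is an MZ-space of $A := \C[X_1,\ldots,X_n]/I$. I would then check the two hypotheses of Zhao's Idempotency Theorem for $M = \Ker L / I \subset A$. First, $A$ is finite-dimensional over $\C$ (spanned by the finitely many monomials of total degree $<N$), so every element of $A$, and in particular every element of $r(M)$, is algebraic over $\C$. Second, I need to control the idempotents of $M$. Here the key observation is that $\m m/I$ is the unique maximal ideal of $A$: indeed, $\m m$ is maximal in $\C[X_1,\ldots,X_n]$ and every prime ideal of $A$ contains the nilpotent ideal $\m m/I$. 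Hence $A$ is a local ring, so its only idempotents are $0$ and $1$. Since $L(1)\neq 0$, the class of $1$ does not lie in $\Ker L/I$, and therefore the only idempotent in $M$ is $0$, for which $A\cdot 0 = 0 \subset M$ trivially. By Zhao's Idempotency Theorem, $M$ is an MZ-space of $A$, and Corollary \ref{cor224} then lifts this to $\Ker L$ being an MZ-space of $\C[X_1,\ldots,X_n]$.

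There is no real obstacle: the entire argument is structurally identical to Lemma \ref{enigma}, and the only point that deserves a sentence of justification is that $\C[X_1,\ldots,X_n]/\m m^N$ is a local ring (which follows either from the fact that $\m m$ is maximal, or by noting that $A$ is a finite-dimensional local Artinian $\C$-algebra with residue field $\C$).
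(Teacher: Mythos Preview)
Your proof is correct and is exactly the natural multivariate analogue of the paper's proof of Lemma~\ref{enigma}: pass to the finite-dimensional local quotient $\C[X_1,\ldots,X_n]/\m m^N$, observe that its only idempotents are $0$ and $1$, exclude $1$ via $L(1)\neq 0$, and conclude by Zhao's Idempotency Theorem together with Corollary~\ref{cor224}. The paper itself does not reproduce a proof of this proposition (it refers to \cite{Thesis}), but your argument is precisely the intended generalization and matches the one-variable template line by line.
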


We cannot prove
    \begin{indentation}{10mm}{0mm}
     Let $L \colon \C[X_1,\ldots,X_n] \to \C$ be a $\C$-linear map for which there exists an $N\geq 1$ such that $L(X_i^m) = 0$ for all $m\in \Z_{\geq N}$ and all $i \in \{1,\ldots,n\}$. 
     Then $\Ker L$ is an MZ-space of $\C[X_1,\ldots,X_n]$ if and only if $L(1)\neq 0$.
    \end{indentation}
For this we have the following counterexample:

\begin{example} 
Take $n=2$ and $N=2$ in the following form. 
Let $L \colon \C[X_1,X_2] \to \C$ be the $\C$-linear map given by \begin{align*} L(X_1^iX_2^j) =  \begin{cases}
 1 &\mbox{if } i=1 \mbox{ or } i=j=0 \\
0 & \mbox{else } \end{cases} \end{align*}
Then we have $L(1)=1$, and $L(X_i^m) = 0$ for all $m \geq 2$. 
It is then clear that $X_2 \in r(\Ker L)$ by construction, but $X_1X_2^m \not\in \Ker L$ for all $m\geq 1$. 
Therefore $\Ker L$ is not an MZ-space. 
\end{example}

While it is technically not a generalization, we also have the following lemma, which uses the same method-of-proof:

\begin{proposition} 
Let $k$ be a field, $A$ a \emph{finite-dimensional} $k$-algebra and $\m m$ a maximal ideal of $A$. 
Let $L \colon A \to k$ be a $k$-linear map for which there exists an $N\geq 1$ such that $\m m^l \subset \Ker L$ for all $l\geq N$. 
Then $\Ker L$ is an MZ-space of $A$ if and only if $L(1)\neq 0$.
\end{proposition}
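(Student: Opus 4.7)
The plan is to adapt the proof of Lemma \ref{enigma} essentially verbatim, with $\m m$ playing the role of the maximal ideal $(X) \subset \C[X]$ and the finite-dimensionality of $A$ playing the role of the finite-dimensionality of $\C[X]/(X^N)$.

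For the forward direction, I would assume $\Ker L$ is an MZ-space and argue that if $L(1) = 0$, then $1 \in \Ker L$, so by the ``in particular'' consequence of Zhao's Idempotency Theorem (Theorem \ref{zhao}), $\Ker L = A$, i.e.\ $L = 0$, contradicting the standing non-triviality convention. Hence $L(1) \neq 0$.

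For the backward direction, assume $L(1) \neq 0$. Since $\m m^N$ is an ideal of $A$ contained in $\Ker L$, Proposition \ref{cor224} reduces the problem to showing that $\Ker L / \m m^N$ is an MZ-space of $A/\m m^N$. I would verify the hypotheses of Zhao's Idempotency Theorem: (i) every element of $r(\Ker L/\m m^N)$ is algebraic over $k$, and (ii) for every idempotent $e \in \Ker L/\m m^N$, we have $(A/\m m^N)e \subset \Ker L/\m m^N$. Condition (i) is immediate since $A/\m m^N$ is a finite-dimensional $k$-algebra, so each of its elements is algebraic over $k$. For (ii), any prime ideal of $A$ containing $\m m^N$ must contain $\m m = \sqrt{\m m^N}$, and hence coincide with $\m m$ by maximality; thus $A/\m m^N$ is a local ring whose only idempotents are $0$ and $1$. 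Since $L(1) \neq 0$ and $\m m^N \subset \Ker L$, the coset of $1$ does not lie in $\Ker L/\m m^N$, so $0$ is the only idempotent of $\Ker L/\m m^N$, making (ii) trivial.

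I do not anticipate any serious obstacle: the only slightly subtle ingredient is the locality of $A/\m m^N$, which rests on the standard fact $\sqrt{\m m^N} = \m m$ combined with the maximality of $\m m$. Everything else is a direct translation of the argument from Lemma \ref{enigma}, with the two essential hypotheses (algebraicity via finite-dimensionality, triviality of the idempotent obstruction via locality) recovered in exactly the same way.
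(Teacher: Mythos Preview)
Your proposal is correct and matches what the paper intends: the paper does not give its own proof here but explicitly states that this proposition ``uses the same method-of-proof'' as Lemma \ref{enigma}, which is precisely the adaptation you carry out. Every step---the reduction via Proposition \ref{cor224} modulo $\m m^N$, the finite-dimensionality yielding algebraicity, and the locality of $A/\m m^N$ forcing the only idempotent in $\Ker L/\m m^N$ to be $0$---is the expected translation of the original argument.
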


In this section we discuss a generalization of Zhao's Idempotency Theorem (Theorem \ref{zhao}), that is in the exercises of \cite{Ess2}.

\begin{definition} 
Let $R$ be a ring, $S$ a multiplicatively closed subset of $R$ and $A$ an $R$-algebra. 
An $R$-submodule $M$ of $A$ is called \emph{$S$-saturated} if we have $a\in M$ for all $a\in A$ for which there exists some $s\in S$ such that $sa\in M$, i.e., 
    \begin{align*} \{ a\in A \mid \exists s\in S[sa\in M]\} \subset M.\end{align*}
\end{definition}

\begin{definition} 
Let $R$ be a ring, $S$ a multiplicatively closed subset of $R$ and $A$ an $R$-algebra. 
Then $e\in A$ is an \emph{$S$-idempotent} if there exists some $s\in S\setminus\{0\}$ such that $e^2=se$.
\end{definition}

\begin{theorem}[Generalized Zhao Theorem \cite{Ess2}]
Let $R$ be a domain, $S= R\setminus\{0\}$ and $A$ an $R$-algebra. 
Let $M$ be an $S$-saturated $R$-submodule of $A$ such that all elements of $r(M)$ are algebraic over $R$. 
Then $M$ is an MZ-space of $A$ if and only if for every $S$-idempotent $e$ of $A$ which belongs to $M$ we have that for every $a\in A$ there exists an $s\in S$ such that $sae\in M$.
\end{theorem}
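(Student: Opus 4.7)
The plan is to reduce to Zhao's Idempotency Theorem (Theorem~\ref{zhao}) by localizing at $S$. Let $K = S^{-1}R$ denote the fraction field of the domain $R$, let $A_S = S^{-1}A$, which is a $K$-algebra, and let $M_S = S^{-1}M$, a $K$-subspace of $A_S$. The $S$-saturation hypothesis is exactly the statement that $a \in A$ satisfies $a/1 \in M_S$ if and only if $a \in M$, so $M$ is the preimage of $M_S$ under the canonical map $A \to A_S$.

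First I would establish the equivalence: $M$ is an MZ-space of $A$ if and only if $M_S$ is an MZ-space of $A_S$. Given $\alpha = a/s \in r(M_S)$, the relation $\alpha^m \in M_S$ for $m \gg 0$ yields, after clearing denominators and invoking $S$-saturation, $a^m \in M$, so $a \in r(M)$; conversely elements of $r(M)$ visibly have images in $r(M_S)$. The same bookkeeping, applied to products $ba^m$ versus $(b/s'')\alpha^m$, shows $\alpha \in sr(M_S)$ if and only if its representative lies in $sr(M)$, and the equivalence of the MZ-properties follows.

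Next I would verify the hypotheses of Zhao's theorem for $M_S \subset A_S$. By the translation above, every $\alpha = a/s \in r(M_S)$ has $a \in r(M)$; since $a$ satisfies a nonzero polynomial relation over $R$, dividing by the leading coefficient (which is invertible in $K$) produces a monic relation over $K$, so $\alpha = a/s$ is algebraic over $K$. Theorem~\ref{zhao} then applies and yields: $M_S$ is an MZ-space of $A_S$ if and only if $A_S \varepsilon \subset M_S$ for every idempotent $\varepsilon$ of $A_S$ lying in $M_S$.

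The main remaining task, and what I expect to be the fiddliest, is translating this last condition back into the stated condition on $S$-idempotents of $M$. An $S$-idempotent $e \in M$ with $e^2 = se$ directly produces an idempotent $\varepsilon = e/s$ of $A_S$ lying in $M_S$. Conversely, an idempotent $\varepsilon = e/s \in M_S$ satisfies $t(se^2 - s^2 e) = 0$ for some $t \in S$, and the element $e' = tse$ then satisfies $(e')^2 = (ts^2) e'$, so $e'$ is an $S$-idempotent; moreover $e' \in M$ since $e \in M$ by $S$-saturation, so every idempotent of $M_S$ lifts. Under this correspondence the condition $A_S \varepsilon \subset M_S$ for all idempotents $\varepsilon \in M_S$ unravels, via repeated clearing of denominators, to the requirement that for every $S$-idempotent $e \in M$ and every $a \in A$ there exists some $s' \in S$ with $s' a e \in M$, which is precisely the condition in the statement.
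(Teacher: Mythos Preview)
The paper does not actually include a proof of this theorem; it is merely stated, with a citation to the exercises of \cite{Ess2}, and the surrounding results in that section are deferred to \cite{Thesis}. There is therefore no paper-proof to compare against.

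Your proposed argument is correct and is the natural one: pass to the fraction field $K=S^{-1}R$, observe that $S$-saturation is precisely the condition that $M$ is the preimage of $M_S$ under $A\to A_S$, transfer the MZ-property back and forth along this map, check that algebraicity over $R$ becomes algebraicity over $K$, and then match idempotents of $M_S$ with $S$-idempotents of $M$. Each step goes through as you indicate; in particular your lift of an idempotent $\varepsilon=e/s\in M_S$ to the $S$-idempotent $e'=tse\in M$ (with $(e')^2=ts^2\,e'$) is exactly what is needed, and $e'\in M$ because $e\in M$ by saturation and $M$ is an $R$-submodule. One small remark: since $M$ is assumed $S$-saturated, the condition ``there exists $s\in S$ with $sae\in M$'' in the statement is already equivalent to $ae\in M$, so the translated condition can be phrased simply as $Ae\subset M$ for every $S$-idempotent $e\in M$; this slightly streamlines your final paragraph.
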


Using this Generalized Zhao Theorem, we can prove the following generalization of Lemma \ref{lemma1}:

\begin{proposition} 
Let $R$ be a domain and $L \colon R[Y] \to R$ be a linear map such that there exists an $N\geq 1$ for which $L(Y^n) = 0$ for all $n\geq N$. 
Then $\Ker L$ is an MZ-space of $R[Y]$ if and only if $L(1)\neq 0$.
\end{proposition}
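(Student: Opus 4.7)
The strategy parallels the proof of Lemma~\ref{lemma1}, but the classical Zhao Theorem is replaced by the Generalized Zhao Theorem stated above, applied with the multiplicative set $S = R \setminus \{0\}$. The forward direction is essentially unchanged: if $L(1) = 0$, then $1 \in \Ker L$, and the MZ-property would force $b = b \cdot 1^m \in \Ker L$ for every $b$ and all $m \gg 0$, hence $L = 0$, contradicting non-triviality.

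For the converse, assume $L(1) \neq 0$. First I would observe that the hypothesis $L(Y^n) = 0$ for $n \geq N$ together with $R$-linearity of $L$ yields $(Y^N) \subset \Ker L$ exactly as in the proof of Lemma~\ref{lemma1}. By Proposition~\ref{cor224} it therefore suffices to show that $\Ker L / (Y^N)$ is an MZ-space of the finite free $R$-algebra $A := R[Y]/(Y^N)$, and I would apply the Generalized Zhao Theorem to this quotient.

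The three hypotheses of that theorem to verify are: (i) $S$-saturation of $\Ker L / (Y^N)$, which is immediate since $L$ is $R$-linear and $R$ is a domain, so $sL(a) = 0$ with $s \neq 0$ implies $L(a) = 0$; (ii) every element of $r(\Ker L / (Y^N)) \subset A$ is algebraic over $R$, which is automatic because $A$ is a free $R$-module of rank $N$, so applying Cayley--Hamilton to the multiplication endomorphism $m_a$ produces a monic polynomial of degree $N$ annihilating any $a \in A$; and (iii) the $S$-idempotent condition.

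The main obstacle is step (iii), namely classifying the $S$-idempotents of $A$, since (unlike $\C[X]/(X^N)$ in Lemma~\ref{lemma1}) the ring $A$ is generally not local. Writing $e = \sum_{i=0}^{N-1} a_i Y^i$ and $e^2 = se$ with $s \in R \setminus \{0\}$, comparison of constant terms yields $a_0(a_0 - s) = 0$, so by the domain hypothesis $a_0 \in \{0, s\}$. When $a_0 = s$, comparing successive coefficients of $Y^k$ inductively forces $s a_k = 0$ and hence $a_k = 0$ for all $k \geq 1$, giving $e = s$. When $a_0 = 0$, the element $e$ lies in the nilradical $(Y)/(Y^N)$ of $A$; iterating $e^2 = se$ yields $e^n = s^{n-1} e$ for all $n$, and nilpotency combined with the domain property forces $e = 0$. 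Thus the $S$-idempotents of $A$ are exactly the scalars $e = 0$ and $e = s$ for $s \in S$. Among these, $e = s \neq 0$ cannot lie in $\Ker L / (Y^N)$ because $L(s) = s L(1) \neq 0$, so only $e = 0$ remains, for which the required condition of the Generalized Zhao Theorem holds trivially. This completes the verification and yields the proposition.
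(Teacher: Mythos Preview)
Your proposal is correct and follows precisely the approach the paper indicates: reduce modulo $(Y^N)$ via Proposition~\ref{cor224} and then apply the Generalized Zhao Theorem to $R[Y]/(Y^N)$, in direct analogy with Lemma~\ref{lemma1}. The paper itself does not spell out the argument (it defers to \cite{Thesis}), but your verification of $S$-saturation, algebraicity via Cayley--Hamilton, and the classification of $S$-idempotents of $R[Y]/(Y^N)$ as exactly $0$ and the nonzero scalars is the expected and intended line.
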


 \bibliographystyle{amsplain}
 \bibliography{refs}

\end{document}